\newtheorem{theorem}{Theorem}[section]
\newtheorem{cor}[theorem]{Corollary}
\newtheorem{remark}[theorem]{Remark}
\newtheorem{definition}[theorem]{Definition}
\newcommand{\R}{\mathbb{R}}
\newcommand{\N}{\mathbb{N}}
\newcommand{\Z}{\mathbb{Z}}
\title[Bendixson--Dulac theorem]
{Effectiveness\\ of the Bendixson--Dulac theorem}
\subjclass[2010]{Primary: 34C07.  Secondary: 37C27}
\keywords{Limit cycle; Periodic orbit; Bendixson--Dulac theorem;
Li\'{e}nard equation}
\author[A. Gasull]{Armengol Gasull}
\address{Departament de Matem\`{a}tiques, Edifici Cc, Universitat Aut\`{o}noma de Barcelona, 08193 Cerdanyola del Vall\`{e}s (Barcelona), Spain.}
\address{Centre de Recerca Matem\`{a}tica, Edifici Cc, Campus de Bellaterra, 08193 Cerdanyola del Vall\`{e}s (Barcelona), Spain.}
\email{gasull@mat.uab.cat}
\author[H. Giacomini]{Hector Giacomini}
\address{ Institut Denis Poisson. Universit\'{e} de Tours,  C.N.R.S. UMR 7013. 37200 Tours, France.} \email{Hector.Giacomini@lmpt.univ-tours.fr}
\date{}
\begin{document}

\begin{abstract} We illustrate with several new applications the power and elegance of the
Bendixson--Dulac theorem to obtain upper bounds of the number of
limit cycles for several families of planar vector fields. In some
cases we propose to use a function related with the curvature of the
orbits of the vector field as a Dulac function. We get some general
results for Li\'{e}nard type equations and for rigid planar systems. We
also present a remarkable phenomenon: for each integer $m\ge2,$ we
provide a simple $1$-parametric differential system for which we
prove that it has limit cycles only for the values of the parameter
in a subset of an interval of length smaller that
$3\sqrt{2}(3/m)^{m/2}$ that decreases exponentially when $m$ grows.
One of the strengths of the results presented in this
work is that although they are obtained with simple calculations,
that can be easily checked by hand, they improve and extend previous
studies. Another one is that, for certain systems, it is possible to reduce the question of the number of limit cycles to the study of the shape of a planar curve and the sign of an
associated function in one or two variables.
\end{abstract}

\maketitle

\section{Introduction}

Despite all the efforts dedicated to solve the second part of the
Hilbert's 16th problem, it is yet a very difficult task to obtain
criteria that give explicit upper bounds for many concrete families
of planar  smooth vector fields. Although there is no any universal
approach, the aim of this paper is to present several families of planar
systems for which the Bendixson--Dulac theorem allows to get, in a
fast and elegant way, an upper bound of their number
of limit cycles. We will avoid results based on  cumbersome
computations.

The families that we will consider include extensions of Li\'{e}nard
systems and rigid systems. As we will see, we obtain new results and
we also present simple proofs of some recent results. They give
explicit upper bounds for several families of planar vector fields. These
upper bounds are also sharpened when we deal with more particular
systems, obtaining results of at most two, one, or none limit cycles.

Our main results for Li\'{e}nard type  systems are contained in
Section~\ref{se:lie}. They are  given in Theorem~\ref{th:wil}, that
deals with a version of Wilson Li\'{e}nard systems which always have an algebraic limit cycle,
 in Theorem~\ref{th:nou} that
studies a family recently introduced in \cite{VilZan2020}, in
Theorem~\ref{th:mas} that extends the classical theorem of Massera,
and in Theorem~\ref{th:vil}. In fact, this last result includes the
remarkable phenomenon highlighted in the abstract: the family
\begin{equation*}
\begin{cases}
\dot x=y-\lambda |y|^{m}(x^3-x),\\
\dot y=-x,
\end{cases}
\end{equation*}
introduced in \cite{VilZan2020}, has for $m\ge2$, limit cycles only
for some values of $\lambda$ contained in the interval of length
$3\sqrt{2}( 3 /m)^{m/2},$ centered at the origin. Notice that for
$m$ big it is extremely thin. This interval decreases
exponentially with $m$. \\
Our main result for general rigid systems is given in Theorem
\ref{te:rig} of Section \ref{se:rig}. It is applied to recover, in a
simple way, known results for rigid cubic systems and to a family
containing non polynomial vector fields.

It seems to us that not all the mathematical community that works on
these topics is aware of the capability of the
Bendixson--Dulac approach. The goal of this work is double: we  try
to change this perception and we also present several new results
and easy proofs of some known results. For instance, in most
textbooks, the proof of the uniqueness and hyperbolicity of the
limit cycle for the classical van der Pol equation needs some work.
By using this approach  there are extremely simple proofs, see
Corollary~\ref{co:vdp} and Remark~\ref{re:vdp}.

The today known as Bendixson--Dulac theorem  was first formulated by
Ivar Bendixson in 1901 (\cite{Ben1901}), and later developed by
Henri Dulac in 1937 (\cite{Dul1937}). He improved Bendixson's result
by introducing a new parameterization of the time, via the today
called Dulac functions. This result appears, under different versions,
 in most differential equations textbooks.
One of the pioneers to try to go further with this approach was
Yamato (\cite{Yam1979}). Afterward, one of its main defenders was
Cherkas, who used and developed it, see for instance
\cite{Che1997,CheGri2010}. The authors of this work also often apply
and try to extend this method, see \cite{GasGia2002,GasGia2006,GasGia2010}. More
examples about its applicability can be seen in the survey
\cite{GasGia2013}.

In this paper we will use the version of the Bendixson--Dulac
theorem that we state below, after introducing some notations and
definitions. For completeness, in Section \ref{se:pre} we present a
proof based on the version of Bendixson--Dulac theorem for multiply
connected regions that is proved for instance in
\cite{Che1997,GasGia2002,Llo1979}.

Given an open connected subset $\mathcal{U}\subset\R^2,$ with
finitely many holes, we will denote by $\ell=\ell(\mathcal{U})$ this
number of holes, that is,  the number of bounded components of
$\R^2\setminus\mathcal{U}.$ Notice that if $\mathcal{U}$ is simply
connected then $\ell(\mathcal{U})=0.$ We also set
$\R^+=\{x\in\R\,:\, x>0\}.$

 For a continuous function $f:\R^n\to\R,$ not changing sign and
vanishing on a null measure set, we will denote by
$\operatorname{sign}(f)$ the sign of $f$ at any of its  point where
it is not zero. Moreover, given an equilibrium point or a periodic
orbit, when we say that its stability is given by the sign of $f$ we
mean that the object is an attractor (resp. a repeller) whenever
$\operatorname{sign}(f)<0$ (resp. $\operatorname{sign}(f)>0$).

\begin{definition}\label{def} Given a function  $V:\R^2\to\R$ of class
$\mathcal{C}^1$ we will say that it is {\rm admissible} if:
\begin{enumerate}[(i)]
\item The vector $\nabla V$ vanishes on $\{V(x,y)=0\}$ at finitely many points.
\item The set $ \{V(x,y)=0\}$ has finitely many
connected components.
\item The set $\R^2\setminus \{V(x,y)=0\}$ has $j$ connected components,
$\mathcal{U}_i, i=1,2,\ldots j,$ and for all of them
$\ell(\mathcal{U}_i)<\infty.$
\end{enumerate}
Associated to $V,$ we define the non negative integer
number
\[L(V):=\sum_{i=1}^j \ell(\mathcal{U}_i).\]
\end{definition}

\begin{theorem}[Bendixson--Dulac theorem]\label{th:bd}
Consider a $\mathcal{C}^1$ planar differential system
\begin{equation}\label{eq:bd}
\dot x= P(x,y),\quad \dot y=Q(x,y),
\end{equation}
and denote by $X=(P,Q)$ its associated vector field. Let
$V:\R^2\to\R$ be an admissible function such that there exists
$s\in\R^+$ for which the function
\begin{equation}\label{eq:ms} M_s:=\frac{\partial V}{\partial x}
P+\frac{\partial V}{\partial y} Q-s \left(\frac{\partial P}{\partial
x}+\frac{\partial Q}{\partial y}\right)V\end{equation} does not
change sign and vanishes only on a null measure set. Define
\[L_X(V):=N+L(V),\]
where $N$ is the number of periodic orbits of $X$ contained  in the
set $\mathcal{V}=\{V(x,y)=0\}.$

 Then, the differential system \eqref{eq:bd} has at most $L_X(V)$ periodic
orbits, which are limit cycles. Moreover, each limit cycle not
contained in $\mathcal{V}$ is hyperbolic,  it is contained in one of
the connected components $\mathcal{U}_i$ of $\R^2\setminus
\mathcal{V}$ and, for each $i=1,2,\ldots,j,$ there are at most
$\ell(\mathcal{U}_i)$ limit cycles in the component $\mathcal{U}_i.$
 The stability of each of these limit cycles is given
by the sign of $-VM_s$ on the region $\mathcal{U}_i$.
\end{theorem}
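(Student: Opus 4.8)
The plan is to reduce the statement to the classical Bendixson--Dulac theorem for multiply connected regions (as proved in \cite{Che1997,GasGia2002,Llo1979}), applied separately on each connected component $\mc U_i$ of $\R^2\setminus\mc V$, using a Dulac function built from $V$ itself. On each $\mc U_i$ the function $V$ has a constant sign and does not vanish, so I would set the $\mc C^1$ Dulac function $B:=|V|^{-1/s}$. Writing $\operatorname{div}(BX)=B(P_x+Q_y)+B_xP+B_yQ$ and using the unified identity $B_x=-\tfrac1s|V|^{-1/s-1}\operatorname{sign}(V)\,V_x$ (and analogously for $B_y$), a direct computation gives
\[
\operatorname{div}(BX)=-\frac{\operatorname{sign}(V)}{s}\,|V|^{-1/s-1}\,M_s .
\]
Since $s\in\R^+$ and $|V|^{-1/s-1}>0$ on $\mc U_i$, the sign of $\operatorname{div}(BX)$ equals $\operatorname{sign}(-VM_s)$, which is constant on $\mc U_i$ because $M_s$ does not change sign and $V$ has fixed sign there; moreover $\operatorname{div}(BX)$ vanishes only where $M_s$ does, that is, on a null measure set.

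With this Dulac function in hand I would invoke the multiply connected version of the theorem on each $\mc U_i$: a region with $\ell(\mc U_i)$ holes admitting a Dulac function whose divergence has constant sign and vanishes only on a null set contains at most $\ell(\mc U_i)$ periodic orbits of $X$, all of them hyperbolic limit cycles whose stability is governed by the sign of $\operatorname{div}(BX)$. By the computation above this sign is exactly that of $-VM_s$, which yields simultaneously the local bound $\ell(\mc U_i)$, the hyperbolicity, and the asserted stability on each component.

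The remaining and most delicate point is to show that every periodic orbit lies either entirely in $\mc V$ or entirely in a single component $\mc U_i$; equivalently, that no periodic orbit crosses $\mc V$. Here I would use that on $\mc V=\{V=0\}$ one has $\dot V=\nabla V\cdot X=M_s$, since the divergence term in \eqref{eq:ms} carries the factor $V$ and vanishes on $\mc V$. Thus $\dot V$ has constant sign on $\mc V$ away from a null set and from the finitely many points where $\nabla V=0$ (admissibility~(i)), so the flow is transversal to $\mc V$ and crosses it always in the same direction. For a periodic orbit $\gamma\not\subset\mc V$, the periodic function $t\mapsto V(\gamma(t))$ is not identically zero; if it vanished it would have to both increase and decrease through $0$ along a period, forcing $M_s$ to take opposite signs at two crossing points, a contradiction. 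The technical heart of the proof is precisely the control of the finitely many points with $\nabla V=0$ and of possible tangential contacts, carried out with admissibility conditions~(i)--(ii); I expect this to be the main obstacle. Once it is settled, the orbits inside $\mc V$ number $N$ by definition, while those outside contribute at most $\sum_{i=1}^{j}\ell(\mc U_i)=L(V)$, giving the global bound $N+L(V)=L_X(V)$. Finally, every periodic orbit is isolated, hence a limit cycle: those inside the $\mc U_i$ are hyperbolic, and a non-isolated orbit inside $\mc V$ would be accumulated by periodic orbits which, by the previous step, must lie in the components $\mc U_i$ and there are finitely many and hyperbolic, so no such accumulation can occur.
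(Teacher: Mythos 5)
Your proposal is essentially the paper's own proof: the same Dulac function $B=|V|^{-1/s}$ on each component $\mathcal{U}_i$, the same identity $\operatorname{div}\big(|V|^{-1/s}X\big)=-\frac1s\operatorname{sign}(V)|V|^{-1/s-1}M_s$, the same appeal to the multiply connected Bendixson--Dulac theorem to get the bound $\ell(\mathcal{U}_i)$, the hyperbolicity and the stability from the sign of $-VM_s$, and the same observation that $M_s\big|_{\mathcal{V}}=\dot V$ prevents periodic orbits from crossing $\mathcal{V}$. The tangency issue you flag as the remaining obstacle is in fact glossed over by the paper as well; it can be closed cleanly by noting that along any orbit $\dot V=M_s+s\,(\operatorname{div}X)\,V$, so $t\mapsto \exp\big(-s\int_0^t\operatorname{div}X(\gamma(\tau))\,{\rm d}\tau\big)\,V(\gamma(t))$ is monotone because $M_s$ has a fixed sign, and hence for a $T$-periodic orbit $\gamma$ with $V(\gamma(t_0))=0$ this monotone function vanishes at both $t_0$ and $t_0+T$, forcing $V\circ\gamma\equiv0$, i.e., $\gamma\subset\mathcal{V}$.
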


\begin{remark} The function $M_s,$ when $s\le0,$ can also be used to
control te number of limit cycles of \eqref{eq:bd}, see
\cite{Che1997,GasGia2013}. In particular, notice that $M_0=\dot V$
and it can be readily seen that, when $s<0,$ the theorem also works,
giving that $L_X(V)=N.$ In this work, we do not use this range of
values of $s.$ In fact, in most of our applications we
will use $s=1,$ although the values $s=2$ and $s=1/3,$ also will
appear.
\end{remark}

Observe also that, somehow, this version of the Bendixson--Dulac
theorem relates the second part of the Hilbert's 16th problem, which
deals with the number of limit cycles (\cite{Il2002}), with the
first part, that deals with the number and distribution of the ovals
of a planar algebraic curve~(\cite{Wil1978}).

Similarly of what happens when one tries to use Lyapunov functions,
the main difficulty in the above theorem for its practical use is
the choice of the function $V$ and of the positive real number $s.$
In other words, the choice of a suitable Dulac function. As we will
see, the function that gives the curvature of the orbits of
\eqref{eq:bd} is sometimes a good candidate for $V.$

Moreover, the most difficult condition to be checked is that $M_s$
does not change sign. Hence, several approaches try to arrive to
functions for which this question can be more easily studied. For
instance, one of these situations is when it is a function of only one variable or
 the product of two functions of one variable, see again \cite{GasGia2013} for some
examples. Another one is when, from some point of view, we can look
to $M_s$ as a quadratic polynomial.

Finally, notice that, given $V$ and $X,$ the computation of the
number $N$ in $L_X(V)$ is usually not difficult, while $L(V)$
depends on the topology of the set $\{V(x,y)=0\},$ see
Section \ref{ss:LV}. When $V$ is polynomial in one of its
variables, to get an upper bound of $L(V)$ is an affordable task.

\section{Preliminary results}
\label{se:pre}

For the sake of notation, from now on, in this paper we will denote
the partial derivatives as subscripts. Hence, for instance, for
$F=F(x,y),$ $F_x=\frac{\partial F}{\partial x},$ or
$F_{x,y}=\frac{\partial^2 F}{\partial x\partial y}$.

\subsection{Proof of Theorem \ref{th:bd}}

We recall a version of the Bendixson--Dulac theorem
for multiply connected regions, see
\cite{Che1997,GasGia2002,Llo1979}.

\begin{theorem}\label{th:bdmc} Consider a $\mathcal{C}^1$ planar differential system
\begin{equation}\label{eq:bdd}
\dot x= P(x,y),\quad \dot y=Q(x,y),
\end{equation}
defined on $\mathcal{U}\subset\R^2,$ an open subset such that
$\R^2\setminus \mathcal{U}$ has $\ell$ bounded components, and
denote by $X=(P,Q)$ its associated vector field. Let
$B:\mathcal{U}\to\R^+\cup\{0\}$ be a $\mathcal{C}^1$ function such
that
\begin{equation*}
\operatorname{div}(BX)=(BP)_x+(BQ)_y
\end{equation*} does not
change sign and vanishes only on a null measure set. Then the system
\eqref{eq:bdd} has at most $\ell$ limit cycles in $\mathcal{U}.$
Moreover, all of them are hyperbolic and their stability is given by
the sign of $\operatorname{div}(BX).$
\end{theorem}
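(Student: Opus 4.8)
The plan is to deduce everything from the divergence theorem applied to the rescaled field $BX$, exploiting that every periodic orbit is an integral curve of $X$. First I would record the basic flux identity: if $\Omega\subset\R^2$ is a bounded region whose topological boundary $\partial\Omega$ is a union of periodic orbits of $X$, then along each such boundary curve $X$ is tangent, so $BX\cdot n\equiv0$ there, and Green's theorem gives
\[
\iint_{\Omega}\operatorname{div}(BX)\,dx\,dy=\oint_{\partial\Omega}BX\cdot n\,ds=0.
\]
Since by hypothesis $\operatorname{div}(BX)$ keeps a constant sign and vanishes only on a null set, while $\Omega$ has positive area, this is impossible if $\Omega\subset\mathcal{U}$. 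Hence any such $\Omega$ must meet $\R^2\setminus\mathcal{U}$; any connected component of $\R^2\setminus\mathcal{U}$ meeting $\Omega$ is disjoint from $\partial\Omega\subset\mathcal{U}$, so it lies entirely inside the component $\Omega$ of $\R^2\setminus\partial\Omega$, and it is bounded (a path from it to infinity would have to cross $\partial\Omega\subset\mathcal{U}$); thus it is one of the $\ell$ holes, contained in $\Omega$. In particular, taking $\Omega=\operatorname{Int}(\gamma)$ for a single periodic orbit $\gamma$ shows every periodic orbit surrounds at least one hole; when $\ell=0$ this already proves there are no periodic orbits.

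For the counting bound I would use that distinct periodic orbits are pairwise disjoint simple closed curves. A family $\gamma_1,\dots,\gamma_N$ of $N$ disjoint Jordan curves partitions the plane into $N+1$ connected components of $\R^2\setminus(\gamma_1\cup\cdots\cup\gamma_N)$, exactly $N$ of which are bounded, and the boundary of each such bounded component is a union of some of the $\gamma_i$. By the flux identity each of these $N$ bounded regions contains a hole, and since the regions are pairwise disjoint the holes so assigned are distinct. As there are only $\ell$ holes, this forces $N\le\ell$, which is the asserted bound on the number of periodic orbits in $\mathcal{U}$.

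It remains to treat hyperbolicity and stability. For a periodic orbit $\gamma(t)$ of least period $T$, its stability is governed by the characteristic exponent $\int_0^T\operatorname{div}(X)(\gamma(t))\,dt$: the orbit is hyperbolic when this is nonzero, attracting when it is negative and repelling when positive. Writing $\operatorname{div}(BX)=B\operatorname{div}(X)+\langle\nabla B,X\rangle$ and noting that along $\gamma$ one has $\langle\nabla B,X\rangle=\tfrac{d}{dt}B(\gamma(t))$, I would divide by $B>0$ and integrate, using that $\log B(\gamma(t))$ is $T$-periodic, to obtain
\[
\int_0^T\operatorname{div}(X)(\gamma(t))\,dt=\int_0^T\frac{\operatorname{div}(BX)}{B}\bigg|_{\gamma(t)}\,dt.
\]
The right-hand integrand has the constant sign of $\operatorname{div}(BX)$ and is nonzero almost everywhere, so the characteristic exponent is nonzero with that sign; thus each periodic orbit is a hyperbolic limit cycle whose stability is given by the sign of $\operatorname{div}(BX)$, with the convention fixed above.

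The analytic steps are routine; the genuine obstacle is the topological bookkeeping of the second paragraph, namely the facts that $N$ disjoint Jordan curves leave exactly $N$ bounded complementary regions each bordered only by orbits, and that every hole meeting such a region lies entirely within it. These rest on the Jordan curve theorem and a connected-component argument. A minor technical point is the possible vanishing of $B$ on $\gamma$, which must be excluded (or reduced to a null set of times) before dividing by $B$ in the stability computation.
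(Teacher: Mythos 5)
First, a remark on the comparison itself: the paper does not prove Theorem~\ref{th:bdmc}. It is recalled from \cite{Che1997,GasGia2002,Llo1979} and used as a black box in the proof of Theorem~\ref{th:bd}, so your proposal can only be measured against the standard literature argument. Your first two paragraphs are exactly that argument, and they are correct: the flux of $BX$ through a periodic orbit vanishes, so any bounded complementary region with closure in $\mathcal{U}$ would give $\iint\operatorname{div}(BX)=0$, which is impossible; hence each of the $N$ bounded regions cut out by $N$ disjoint periodic orbits contains a hole of $\mathcal{U}$, and disjointness gives $N\le\ell$.

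The genuine gap is in your third paragraph, and it is not the ``minor technical point'' you flag at the end. The hypothesis says that $\operatorname{div}(BX)$ vanishes on a set of zero \emph{planar} Lebesgue measure. A periodic orbit $\gamma$ is itself a planar null set, so the hypothesis is perfectly compatible with $\operatorname{div}(BX)\equiv0$ along all of $\gamma$; in that case your integrand $\operatorname{div}(BX)/B$ vanishes identically in $t$, the characteristic exponent you compute is $0$, and no hyperbolicity follows. Your phrase ``nonzero almost everywhere'' silently converts almost-everywhere in the plane into almost-everywhere in the time variable along $\gamma$, which is not implied. (The possible vanishing of $B$ on $\gamma$ is a symptom of the same issue: if $B(p)=0$ at $p\in\gamma$, then $p$ is a minimum of $B\ge0$, so $\nabla B(p)=0$ and $\operatorname{div}(BX)(p)=0$.) Moreover, this gap cannot be repaired from the stated hypotheses, because the hyperbolicity claim is then actually false. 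Take $\mathcal{U}=\{x^2+y^2>4/25\}$, so $\ell=1$, take $B\equiv1$, and consider the system that in polar coordinates reads $\dot r=r(r-1)^3$, $\dot\theta=1$, which is $\mathcal{C}^1$ on $\mathcal{U}$. Then $\operatorname{div}(BX)=(r-1)^2(5r-2)\ge0$ on $\mathcal{U}$ and vanishes only on the circle $r=1$, a null set, so all hypotheses hold. The circle $r=1$ is the unique periodic orbit in $\mathcal{U}$, and it is a repeller (consistent with the counting bound and with the sign statement), but $\operatorname{div}X$ vanishes identically on it, so its characteristic exponent $\int_0^{2\pi}\operatorname{div}X\,{\rm d}t$ is zero and it is not hyperbolic.

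What can be salvaged without extra hypotheses is isolation and the stability assertion: apply your flux identity to the annulus bounded by $\gamma$ and by an arc of a nearby orbit closed up with a piece of a transversal section; the sign of the resulting transversal flux shows that nearby orbits recede from (or approach) $\gamma$ on both sides. Hyperbolicity, however, needs the additional assumption --- implicit in the references and satisfied in all applications made in this paper --- that $\{\operatorname{div}(BX)=0\}$ contains no periodic orbit. Under that assumption your computation does close, after one preliminary lemma handling the zeros of $B$: writing $h(t)=B(\gamma(t))$ and $d(t)=\operatorname{div}X(\gamma(t))$, the function $h(t)\exp\big(\int_{t_0}^{t}d(u)\,{\rm d}u\big)$ is nondecreasing; if $h(t_0)=0$ it vanishes at both ends of a period, hence is identically zero, forcing $\operatorname{div}(BX)\equiv0$ on $\gamma$, a contradiction. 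Thus $B>0$ on $\gamma$, and the same monotone function, having distinct values at the two ends of a period unless $\operatorname{div}(BX)\equiv0$ on $\gamma$, yields a nonzero characteristic exponent with the sign of $\operatorname{div}(BX)$.
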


To prove Theorem \ref{th:bd} first one has to show that the periodic
orbits of \eqref{eq:bd} are either contained in
$\mathcal{V}$ or do not cut this set. This fact follows because
$M_s\big|_{\mathcal{V}}=\nabla V\cdot X=\dot V$ does not change
sign. Hence system \eqref{eq:bd} can have some periodic
orbits contained in $\mathcal{V},$ say that it has~$N,$  and all the
others that are strictly contained in each of the connected
components of $\R^2\setminus \mathcal{V}.$ Fix one of these
connected components, say~$\mathcal{U}_i.$ To control the number of
periodic orbits in this set we will apply Theorem~\ref{th:bdmc} with
$B=|V|^{-1/s}$ and $\mathcal{U}=\mathcal{U}_i.$ Some computations
give that
\begin{equation}\label{eq:div}
\operatorname{div}\Big(|V|^{-1/s}X\Big)=-\frac1s\operatorname{sign}(V)|V|^{-1/s-1}M_s
\end{equation}
and, by hypothesis, this function does not change sign on
$\mathcal{U}_i.$ As a consequence, the maximum number of periodic
orbits in $\mathcal{U}_i$ is $\ell(\mathcal{U}_i),$ as we wanted to
prove. Moreover, by using \eqref{eq:div} and again Theorem
\ref{th:bdmc}  we get that all of them are hyperbolic and their
stability is given by the sign of $-VM_s.$

\begin{remark} Notice that in Theorem \ref{th:bd}
nothing is said about the hyperbolicity of the limit cycles
contained in $\mathcal{V}.$ As we will see in
Corollary~\ref{cor:sab}, they can be hyperbolic or not.
\end{remark}

\subsection{About the practical calculation of $L(V)$}\label{ss:LV} Given an
admissible function $V,$ the computation of $L(V)$ relies on the
study of the topology of each of the connected components
$\mathcal{U}_i,$ of $\R^2\setminus \mathcal{V},$ where $\mathcal{V}=\{V(x,y)=0\}.$
Then  $L(V)$ is the sum of all the
quantities $\ell(\mathcal{U}_i)$, where these values are the number of
bounded components of $\R^2\setminus\mathcal{U}_i.$ In fact, it also
holds that the fundamental group of $\mathcal{U}_i$ is
$\pi_1(\mathcal{U}_i)=\Z\stackrel{\ell)}{*\cdots*}\Z,$ where
$\ell=\ell(\mathcal{U}_i).$
In all concrete situations appearing in this work there is a more
direct way for obtaining $L(V).$ This number is simply the number of
bounded connected components of $\mathcal{V}.$

\subsection{Curvature of the orbits}

It is know that the function
\[
K^{\perp}:=Q^2P_x+P^2Q_y-PQ(P_y+Q_x),
\]
that is the numerator of the curvature of the orbits of the vector
field  $X^\perp=(-Q,P),$ orthogonal to the vector field $X=(P,Q),$
associated to the system~\eqref{eq:bd}, can be used to know the
stability of the periodic orbits of \eqref{eq:bd} and other
dynamical features of its phase portrait, see  \cite{Chi1992,
Dil1950,GasGasGui1996} or \cite[p. 29]{Ye1986}. For instance,
Diliberto in 1950 proved that a limit cycle is hyperbolic and stable
(resp. unstable) if and only if
\[
\int_0^l K^\perp(\gamma(s))\,{\rm d} s<0\quad(\mbox{resp.}\,\, >0),
\]
where $\gamma(s)$ is its parameterization by the arc length and $l$
is its length.

 In this work we will see that the function
\begin{equation}\label{eq:k}
K:=Q^2P_y-P^2Q_x+PQ(P_x-Q_y),
\end{equation}
 proportional to the numerator of the curvature of the orbits of
$X$ is, in several cases, a good candidate for a suitable choice of
$V$ in Theorem~\ref{th:bd}. Notice that $K=Q\dot P -P\dot Q
=Q(P_xP+P_yQ)-P(Q_xP+Q_yQ).$
 As far as we know, this is the first time that this
function $K$ is used to control the number of limit cycles of planar
differential systems. We prove:

\begin{theorem}\label{le:cur} Consider planar system \eqref{eq:bd} of class
$\mathcal{C}^2.$ Assume that the function
\[
D:=P^2Q\big(P_{xx}-2Q_{xy}\big)+PQ^2\big(2P_{xy}-Q_{yy}\big)+Q^3P_{yy}-P^3Q_{xx}
\]
does not change sign and vanishes on a null measure set. Then the
system \eqref{eq:bd} has at most $L_X(V)$ limit cycles, where $V=K$
is given in~\eqref{eq:k} and $L_X(V)$ is defined in Theorem
\ref{th:bd}.\end{theorem}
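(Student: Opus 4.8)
The plan is to apply Theorem~\ref{th:bd} directly with the choice $V=K$, where $K$ is the curvature-type function in~\eqref{eq:k}, and with the specific value $s=1$. The whole statement will then follow if I can verify the single hypothesis of Theorem~\ref{th:bd}, namely that the associated function $M_s$ from~\eqref{eq:ms} does not change sign and vanishes only on a null measure set. So the entire content of the proof is to compute $M_1$ explicitly for $V=K$ and to recognize the result as (a nonzero constant multiple of) the function $D$ appearing in the statement.

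First I would recall the two ingredients I need. From~\eqref{eq:ms} with $s=1$, the relevant function is
\[
M_1=K_x P + K_y Q - \big(P_x+Q_y\big)K,
\]
which is nothing but $\dot K - (\operatorname{div} X)\,K$ along the flow. The second ingredient is the explicit form of $K$ itself: from~\eqref{eq:k}, $K=Q^2P_y-P^2Q_x+PQ(P_x-Q_y)$, a function built algebraically from $P,Q$ and their first-order partials. Since the system is assumed to be of class $\mathcal{C}^2$, the quantities $K_x$ and $K_y$ involve second-order partials of $P$ and $Q$, which is exactly why the statement requires one extra degree of smoothness compared with Theorem~\ref{th:bd}.

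The core computation is then to substitute the formula for $K$ into $M_1$, differentiate, and simplify. The key structural point I expect to see is that all the terms involving only first derivatives cancel: the combination $K_xP+K_yQ-(P_x+Q_y)K$ is designed so that the first-order contributions produced by differentiating $K$ are exactly killed by the $-(P_x+Q_y)K$ term, leaving only the genuinely second-order expression. After this cancellation the surviving terms should organize themselves precisely into
\[
D=P^2Q\big(P_{xx}-2Q_{xy}\big)+PQ^2\big(2P_{xy}-Q_{yy}\big)+Q^3P_{yy}-P^3Q_{xx},
\]
so that $M_1$ equals $D$ up to a harmless nonzero multiplicative constant (I would track the exact constant to be safe). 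Given this identity, the hypothesis that $D$ does not change sign and vanishes on a null measure set is precisely the hypothesis that $M_1$ has this property, and Theorem~\ref{th:bd} applied with $V=K$ and $s=1$ yields the bound $L_X(V)$ on the number of limit cycles.

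The main obstacle is purely the bookkeeping of the algebraic cancellation: $K$ has several terms, each of which generates several terms upon differentiation, so verifying cleanly that the first-order pieces cancel and that the second-order pieces assemble into $D$ requires care, though no conceptual difficulty. I would first check that $V=K$ is admissible (finitely many critical points on $\{K=0\}$, finitely many connected components, etc.) so that Theorem~\ref{th:bd} genuinely applies; in the generic $\mathcal{C}^2$ setting this is a mild regularity remark rather than a real hurdle. Once the identity $M_1=cD$ for some nonzero constant $c$ is established and admissibility is in place, the conclusion is immediate from Theorem~\ref{th:bd}.
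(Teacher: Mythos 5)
Your proposal is correct and follows exactly the paper's own proof: take $V=K$ and $s=1$ in Theorem~\ref{th:bd}, and observe that $M_1=D$ (the constant is indeed $1$; writing $K=Q\dot P-P\dot Q$ makes the cancellation of first-order terms immediate, since $M_1=\dot K-(P_x+Q_y)K=Q\ddot P-P\ddot Q-(P_x+Q_y)(Q\dot P-P\dot Q)$ and the leftover first-order pieces combine to $-\dot P\dot Q+\dot Q\dot P=0$). Your additional remark about checking admissibility of $K$ is a reasonable precaution, though the paper treats it as implicit in the definition of $L_X(V)$.
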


\begin{proof} By taking
$V=K,$ as in equation \eqref{eq:k}, and $s=1,$ the function $M_{1}$
given in Theorem \ref{th:bd} is $M_{1}=D$ and the theorem follows.
\end{proof}
We will apply this result at the end of Section $3$ for Li\'{e}nard systems and in Section $4$ to rigid systems.

\section{Li\'{e}nard type systems}\label{se:lie}

We present several applications of the Bendixson--Dulac theorem to
two families related with Li\'{e}nard systems.

\subsection{Li\'{e}nard systems with an explicit solution}
We study a family of Li\'{e}nard type equations introduced recently in
\cite{GasSab2019} that includes the Wilson family of Li\'{e}nard
equations (\cite{Wil1964}), which gave the first example of
such equations having an algebraic limit cycle. More concretely, we
consider systems
\begin{equation}\label{eq:wil}
\begin{cases}
\dot x=y-(x^2-1)B(x),\\
\dot y=-x(1+yB(x)),
\end{cases}
\end{equation}
where $B$ is a $\mathcal{C}^1$ function. They have the invariant
algebraic curve $C(x,y)=x^2+y^2-1=0,$ because $C_x P+C_yQ=-2xBC,$
where $X=(P,Q)$ denotes the vector field associated to
\eqref{eq:wil}. Hence, when this system  has not equilibrium points
on the curve, it is a periodic orbit. Moreover, depending on the choice of the function $B$, it can be
a limit cycle.

The following result allows to extend, and to prove in an easier way, the recent
results about the maximum number of limit cycles of the above system
when $B(x)=x^3-bx$ given in \cite{CheChe2020,GasSab2019}.

\begin{theorem}\label{th:wil} Consider the system \eqref{eq:wil}
with $B(x)=x\int_0^x W(t)/t\,{\rm d}t-bx,$ where $W$ is any function
that does not change sign,  vanishes at isolated points, and such
that $B$ is of class $\mathcal{C}^1.$  Then this system has at most
$L+N$ limit cycles, where $L$ is the number of bounded connected
components of the set $\mathcal{B}=\{x\in\R\,:\,
(B(x)+2x)(B(x)-2x)\ge0\}$ plus one, and $N\in\{0,1\}.$ In fact $N=1$
when $\mathcal{C}=\{x^2+y^2-1=0\}$ is free of equilibrium points of
the system, and then this set is one of the limit cycles, and $N=0$
otherwise. Moreover, all the limit cycles but~$\mathcal{C}$ are
hyperbolic and their stability is given by the sign of $VW$ in the
connected component of $\R^2\setminus\{V(x,y)=0\}$ where they lie,
with
\begin{equation}\label{eq:vv}
V=(1-x^2-y^2)\left(x^2+y^2+B(x)y\right).
\end{equation}
\end{theorem}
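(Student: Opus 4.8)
The plan is to apply Theorem~\ref{th:bd} with the explicit function $V$ given in \eqref{eq:vv} and the value $s=1$. So I need to: (1) verify $V$ is admissible; (2) compute $M_1=V_xP+V_yQ-(P_x+Q_y)V$ and show it has a controllable sign; (3) identify the periodic orbits lying in $\mathcal{V}=\{V=0\}$; and (4) count $L(V)$ in terms of the topology of $\mathcal{V}$.

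Let me think about the structure of $V$. It factors as $V=(1-x^2-y^2)(x^2+y^2+B(x)y)$. The first factor vanishes on the unit circle $\mathcal{C}$, which is the known invariant periodic orbit. The second factor, call it $G=x^2+y^2+B(x)y$, defines another curve. Let me compute things.

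First, $P=y-(x^2-1)B$ and $Q=-x(1+yB)=-x-xyB$. The divergence is $P_x+Q_y=-2xB-(x^2-1)B'-xB=-3xB-(x^2-1)B'-\ldots$ wait let me be careful. $P_x=-2xB-(x^2-1)B'$ and $Q_y=-xB$. So $P_x+Q_y=-3xB-(x^2-1)B'$.

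Now for $M_1$, I should use the product structure. Write $V=C\cdot G$ with $C=1-x^2-y^2$ and $G=x^2+y^2+By$. Then $M_1=\dot V-(P_x+Q_y)V=\dot C\, G+C\,\dot G-(P_x+Q_y)CG$. Since $\dot C=C_xP+C_yQ=2xBC$ (from $C_xP+C_yQ=-(-2xBC)$... the paper says $C_xP+C_yQ=-2xBC$ where $C=x^2+y^2-1$, so with our sign convention $\dot C=2xBC$). Thus $M_1=C\big(2xBG+\dot G-(P_x+Q_y)G\big)$. The factor $C$ appears, which explains why the stability is governed by the sign of $V$ times something. The remaining bracket I would compute explicitly: $\dot G=G_xP+G_yQ$ where $G_x=2x+B'y$, $G_y=2y+B$. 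The hope, given the statement, is that the whole bracket simplifies to something proportional to $W$ times a factor like $(B+2x)(B-2x)$ or a perfect-structured polynomial; the claimed stability sign $VW$ and the set $\mathcal{B}=\{(B+2x)(B-2x)\ge0\}$ strongly suggest $M_1$ factors as a constant multiple of $W\cdot(\text{something involving }(B+2x)(B-2x))$ times $C$ or $V$. I would grind this out and confirm the sign condition reduces to $W$ not changing sign, which is exactly the hypothesis; the substitution $B=x\int_0^x W(t)/t\,dt-bx$ giving $xB'-B=xW-\ldots$ is presumably what makes $W$ appear.

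The main obstacle will be step (4): counting $L(V)$ and organizing the connected components of $\R^2\setminus\mathcal{V}$. I would argue that $\mathcal{V}$ is the union of the circle $\mathcal{C}$ and the curve $G=0$, and that the bounded connected components of $\mathcal{V}$ correspond precisely to: one coming from $\mathcal{C}$, and the others from the ovals of $G=0$, whose count is governed by the shape of the graph of $y$ as a function determined by $x^2+y^2+B(x)y=0$, i.e. $y=\tfrac{-B\pm\sqrt{B^2-4x^2}}{2}$. Real branches exist exactly where $B^2-4x^2\ge0$, that is where $(B+2x)(B-2x)\ge0$, which is the set $\mathcal{B}$; the number of bounded components relates to the number of bounded intervals of $\mathcal{B}$, giving $L$ as stated (the ``plus one'' accounting for the component enclosed by $\mathcal{C}$ or an extra oval). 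I would then invoke Subsection~\ref{ss:LV} to equate $L(V)$ with the number of bounded components of $\mathcal{V}$, set $N\in\{0,1\}$ according to whether $\mathcal{C}$ is free of equilibria (using that equilibria on $\mathcal{C}$ occur where $x(1+yB)=0$ and $y=(x^2-1)B$ simultaneously), and conclude via Theorem~\ref{th:bd} that the total is at most $L+N$ with the stated hyperbolicity and stability. The delicate bookkeeping of which curve contributes which hole, and handling the possible intersections of $\mathcal{C}$ with $\{G=0\}$, is where the real care is needed.
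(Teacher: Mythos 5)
Your proposal is, in outline, exactly the paper's proof: the same $V$, the same $s=1$, the identity $xB'(x)-B(x)=xW(x)$ as the source of $W$, and the same count of $L(V)$ via the two branches $y=\frac{-B(x)\pm\sqrt{(B(x)+2x)(B(x)-2x)}}{2}$ of $\{G=0\}$ over the bounded components of $\mathcal{B}$, plus one for the circle $\mathcal{C}$. The computation you defer does close, and more cleanly than you guess: one finds
\[
M_1(x,y)=x(x^2+y^2-1)^2\big(B(x)-xB'(x)\big)=-x^2(x^2+y^2-1)^2W(x),
\]
so $(B+2x)(B-2x)$ never appears in $M_1$ (it enters only through the geometry of the curve $\{G=0\}$ when counting $L(V)$); the sign hypothesis on $W$ is then precisely what is needed, and the stability of a cycle is given by $\operatorname{sign}(-VM_1)=\operatorname{sign}(VW)$, as claimed. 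One genuine error you must fix before grinding this out: your claim $\dot C=2xBC$ for $C=1-x^2-y^2$ is wrong. The invariance relation $\dot{\tilde C}=-2xB\tilde C$ for $\tilde C=x^2+y^2-1$ is \emph{linear} in $\tilde C$, hence it is unchanged when $\tilde C$ is replaced by $-\tilde C$; directly, $\dot C=-2xP-2yQ=2xB(x^2+y^2-1)=-2xBC$. This matters: with your sign, the bracket in $M_1=C\big(\pm 2xBG+\dot G-(P_x+Q_y)G\big)$, where $G=x^2+y^2+B(x)y$, picks up a spurious term $4xBG$ and does not factor at all; with the correct sign the bracket collapses to $(1-x^2-y^2)\,x\big(B(x)-xB'(x)\big)$, which gives the display above and hence a function of fixed sign vanishing on a null measure set.
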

\begin{proof} Consider the function $V$ given in \eqref{eq:vv} and
$s=1$ in Theorem \ref{th:bd}. Then,
\[
M_1(x,y)=x(x^2+y^2-1)^2\big(B(x)-xB'(x)\big)=-x^2(x^2+y^2-1)^2W(x).
\]
Hence, thanks to the imposed conditions on $W,$ we can apply
Theorem~\ref{th:bd}.  Moreover, since $\mathcal C$ is invariant, and
contained in $\mathcal{V}=\{V(x,y)=0\},$ we have that $N\in\{0,1\}$
and the number of limit cycles of the system  is bounded by
$L(V)+N.$ To get $L(V)$ we study the bounded connected
components of $\mathcal{V},$ see Section \ref{ss:LV}. Notice that
these components are formed by the oval $\mathcal C$ together with
the bounded connected components of $x^2+y^2+B(x)y=0.$ Since this
curve also writes as
\[
y=\frac{-B(x)\pm\sqrt{(B(x)+2x)(B(x)-2x)}}2,
\]
it is clear that these components are obtained by joining the curves
plus and minus defined for $x$ on each of the bounded connected
components of~$\mathcal{B}.$ Hence $L(V)$ is at most $L$ and the
theorem follows.
\end{proof}

The following corollary gives an easier and different proof of all the
results about the maximum number of limit cycles of \eqref{eq:wil}
when $B(x)=x^3-bx,$
\begin{equation}\label{eq:wilc}
\begin{cases}
\dot x=y-(x^2-1)(x^3-bx),\\
\dot y=-x\big(1+y(x^3-bx)\big),
\end{cases}
\end{equation}
obtained in \cite{CheChe2020,GasSab2019}. It also solves in the best
possible way the some times called {\it Coppel's problem} for
polynomial systems,  which in his own words (when restricted to
quadratic systems) says:``Ideally one might hope to characterize
the phase portraits of quadratic systems by means of algebraic
inequalities on the coefficients,'' see \cite{Cop1966}. The relevant
values describing the bifurcations of the limit cycles of this system are
$\underline{b}$ and $b^*,$ see again \cite{GasSab2019}.
 We will show below that the value $\underline b\approx -1.44$ is algebraic. It is the
negative root of the polynomial \begin{equation}\label{eq:p6} 4b^6 -
12b^5 - 4b^4 + 28b^3 + 56b^2 - 72b - 229=0,\end{equation} which is
invariant under the change of variables $b\rightarrow 1-b.$ The
quantity $b(1-b)$ satisfies a third degree polynomial equation and
then it is possible to express all the roots in terms of radicals
but we prefer to omit their explicit expressions
because they are rather complicated. The value $b^*\approx 0.747$ is
the only  zero of the function
$Z:(\underline{b},1-\underline{b})\to\R,$
\[
Z(b)=\int_0^1 \frac
{8(b-3x^2)\sqrt{1-x^2}}{x^8-(2b+1)x^6+(b+2)bx^4-b^2x^2+1}\,{\rm d}
x.
\]
The sign of this function gives the stability of
$\mathcal{C}=\{x^2+y^2-1=0\}$ when $\mathcal{C}$ is a limit cycle. Most probably $b^*$ is a
non-algebraic number. The function $Z$ was obtained
in \cite{GasSab2019} from the integral of the divergence of the
system on the algebraic limit cycle after some algebraic
manipulations. In fact, the discriminant with respect
to $x$ of the denominator of the integrand gives the polynomial of
the left-hand side of~\eqref{eq:p6} that determines
$\underline{b}.$

\begin{cor}\label{cor:sab} System \eqref{eq:wilc} has at most two limit cycles, taking into account their multiplicities.
More concretely:
\begin{enumerate}[(i)]
\item It has  no limit cycle for $b\le \underline{b}.$

\item $\mathcal{C}$ is its only limit cycle, and it is hyperbolic
and attractor when $b\in(\underline{b},0].$

\item It has two limit cycles, one hyperbolic, repeller and surrounded
by $\mathcal{C},$ and $\mathcal{C}$ itself, which is hyperbolic
and attractor, when $b\in(0,b^*).$

\item $\mathcal C$ is its only limit cycle, and it is double and
semi-stable when $b=b^*.$

\item\label{cas5} It has two limit cycles, one hyperbolic, attractor and
surrounding $\mathcal{C},$ and $\mathcal{C}$ itself, which is
hyperbolic and repeller, when $b\in(b^*,1-\underline b).$

\item It has one limit cycle surrounding $\mathcal{C}$ that is
hyperbolic and attractor, when $b\ge 1-\underline{b}.$
\end{enumerate}

\end{cor}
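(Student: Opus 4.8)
The plan is to apply Theorem~\ref{th:wil} in the specific case $B(x)=x^3-bx$, which corresponds to taking $W(t)=3t^2$ in the hypothesis of that theorem (so that $B(x)=x\int_0^x 3t\,\mathrm{d}t-bx=x^3-bx$, after checking the normalization). Since $W(x)=3x^2$ does not change sign and vanishes only at $x=0$, Theorem~\ref{th:wil} applies and gives that the number of limit cycles is at most $L+N$, where $L$ is one plus the number of bounded connected components of $\mathcal{B}=\{x\in\R: (B(x)+2x)(B(x)-2x)\ge0\}$ and $N\in\{0,1\}$. The first task is therefore to compute $L$ explicitly for this cubic $B$. Writing $B(x)\pm2x=x^3-(b\mp2)x=x(x^2-(b\mp2))$, I would analyze the sign of the product $(B(x)+2x)(B(x)-2x)=x^2(x^2-(b-2))(x^2-(b+2))$ as $b$ varies, and count the bounded components of the solution set. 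The key point is to show that this count, together with the algebraic curve $\mathcal{C}$, produces the bound of \emph{two} limit cycles claimed in the statement; I expect $L=1$ generically here, so $L+N\le 2$.

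Once the upper bound of two (counted with multiplicity) is in hand from the Bendixson--Dulac machinery, the bulk of the work is to pin down exactly how many limit cycles occur and their stability in each of the six parameter ranges. Here I would lean on the stability information already provided by Theorem~\ref{th:wil}: all limit cycles other than $\mathcal{C}$ are hyperbolic with stability given by the sign of $VW$ in their component, and the stability of $\mathcal{C}$ itself (when it is a limit cycle) is governed by the sign of the function $Z(b)$ defined in the discussion preceding the corollary, coming from integrating the divergence along $\mathcal{C}$. The strategy for each case is: first decide whether $\mathcal{C}$ is a genuine periodic orbit (i.e.\ whether the system has equilibria on $x^2+y^2=1$), which determines $N$; then use the sign of $Z(b)$ to fix the stability of $\mathcal{C}$; and finally use the upper bound plus index/stability arguments to determine whether a second limit cycle must exist surrounding or surrounded by $\mathcal{C}$.

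Concretely, I would organize the case analysis around the two distinguished values $\underline{b}$ and $b^*$. For $b\le\underline{b}$, I would argue that the component structure forces $L_X(V)=0$, or that no periodic orbit can enclose the unique equilibrium, giving case (i). On the interval $(\underline{b},1-\underline{b})$, $\mathcal{C}$ is free of equilibria so $N=1$ and $\mathcal{C}$ is a limit cycle; the sign of $Z(b)$, which changes exactly at $b^*$, then distinguishes the attractor case $b\in(\underline{b},b^*)$ from the repeller case $b\in(b^*,1-\underline{b})$, yielding cases (ii), (iii) and (v). At $b=b^*$ the orbit $\mathcal{C}$ becomes a double semistable cycle (case (iv)), which I would detect by showing $Z(b^*)=0$ together with a transversality/monotonicity check on $Z$ so that the two hyperbolic cycles of the neighboring ranges coalesce onto $\mathcal{C}$. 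For $b\ge1-\underline{b}$ (case (vi)) I would again recount the components of $\mathcal{B}$ to show that $\mathcal{C}$ is no longer free of equilibria so $N=0$, leaving a single surrounding hyperbolic attractor.

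The main obstacle I anticipate is \emph{not} the upper bound, which Theorem~\ref{th:wil} hands over almost for free, but rather the \emph{existence and exact location} of the second limit cycle and the precise behaviour at the endpoints. In particular, proving that $\mathcal{C}$ is exactly double and semistable at $b=b^*$ requires more than $Z(b^*)=0$: one needs that $Z$ vanishes to odd order (simple zero) there, so that a stable and an unstable cycle genuinely merge, and this hinges on a careful analysis of the integral defining $Z$. Likewise, establishing case (i) rigorously — that there is truly \emph{no} limit cycle below $\underline{b}$, rather than merely an upper bound that happens to be compatible with zero — will need an argument ruling out periodic orbits, presumably by showing that for $b\le\underline{b}$ either $\mathcal{C}$ carries equilibria or the relevant component $\mathcal{U}_i$ contributes $\ell(\mathcal{U}_i)=0$. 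These endpoint and existence questions are where the delicate work lies, and they are precisely the points where the value $\underline{b}$ is characterized as the negative root of the sextic~\eqref{eq:p6} via the discriminant of the denominator of $Z$.
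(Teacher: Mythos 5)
Your plan has the right general ingredients (Theorem~\ref{th:wil}, the sign of $VW$ for the stability of the non-algebraic cycles, the function $Z$ and the values $\underline{b},b^*$ for the stability of $\mathcal{C}$), but it contains a genuine gap at the very step you declare to be free. First, a small slip: with $W(t)=3t^2$ one gets $B(x)=x\int_0^x 3t\,{\rm d}t-bx=\tfrac32x^3-bx$; you need $W(t)=2t^2$ to obtain $B(x)=x^3-bx$. More seriously, your expectation that $L=1$ is false: since $(B(x)+2x)(B(x)-2x)=x^2(x^2+2-b)(x^2-2-b)$, the point $x=0$ always belongs to $\mathcal{B}$, and for $|b|<2$ it is an \emph{isolated} point of $\mathcal{B}$, hence a bounded connected component. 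Thus $L=1+1=2$ and Theorem~\ref{th:wil} only yields the upper bound $L+N=3$. The bound of two limit cycles therefore does \emph{not} come ``almost for free'' from the Bendixson--Dulac machinery; reducing three to two is precisely the substantive part of the proof, and your proposal has no mechanism for it. The paper's reduction combines four stability facts: the origin's stability is given by the sign of $b$; infinity is a repeller for $b>0$; $\mathcal{C}$'s stability is given by the sign of $Z$; and — the key consequence of Theorem~\ref{th:wil} that you do not exploit — any limit cycle $\gamma$ interior to $\mathcal{C}$ must be a hyperbolic repeller and any limit cycle $\Gamma$ exterior to $\mathcal{C}$ a hyperbolic attractor, because their stability is the sign of $VW$, which equals the sign of $1-x^2-y^2$ on the relevant component. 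A Poincar\'{e}--Bendixson argument between these objects then excludes the incompatible configurations (e.g.\ in case (v): origin attractor and $\mathcal{C}$ repeller force the inner cycle not to exist, and $\mathcal{C}$ repeller with infinity repeller forces exactly one outer attracting cycle).

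A second structural point: because Theorem~\ref{th:wil} can never give better than three on $-2<b<2$ (and a similar count holds for $b>2$), the paper supplements it with two independent arguments that your proposal does not anticipate and cannot avoid. For $b\le 0$ it uses polar coordinates, $\dot r=r(r^2-1)(b-r^2\cos^2\theta)\cos^2\theta$, whose sign is fixed off $\mathcal{C}$, so no limit cycle other than possibly $\mathcal{C}$ exists; combined with the fact that $\mathcal{C}$ carries equilibria for $b\le\underline{b}$, this settles case (i) and case (ii) up to the stability of $\mathcal{C}$. For $b\ge 3/2$ it applies Theorem~\ref{th:bd} directly with the different Dulac pair $V=x^2+y^2-1$, $s=1/3$, for which $M_{1/3}=\tfrac13(x^2+y^2-1)\bigl((2b-3)x^2+b\bigr)\ge0$, giving at most two cycles (and the outer one attracting). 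Finally, the paper does not re-derive the behaviour at $b=b^*$ from a transversality analysis of $Z$ as you propose; it simply quotes from \cite{GasSab2019} that $\mathcal{C}$ is double and semi-stable there, which is legitimate since that is an established result about the invariant circle itself, not about the count of other cycles.
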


\begin{proof} First, we will prove the most difficult part:
the maximum number of limit cycles of the system is three. This will
essentially be a direct consequence of Theorem \ref{th:wil}. All the other results
about this system can be obtained from the standard techniques of the qualitative theory of planar differential systems.

When $b\le0$ the only limit cycle is $\mathcal{C}$ because in polar
coordinates, $\dot r=r(r^2-1)(b-r^2\cos^2\theta)\cos^2\theta$ does
not vanish outside $\mathcal{C}.$

When $b\ge3/2,$ the maximum number of limit cycles is two. To prove
this we apply Theorem  \ref{th:bd} with $V(x,y)=x^2+y^2-1$ and
$s=1/3.$ Then
\[
M_{1/3}(x,y)= \frac13(x^2+y^2-1)\big((2b-3)x^2+b\big)\ge0.
\]
Since for these values of $b,$ $M_{1/3}$ does not vanish outside
$\mathcal{C}$ the maximum number of limit cycles is two, one being
$\mathcal{C}$ and at most another one can exist, and in this case it
must surround $\mathcal{C}.$

Finally, consider the values of $b\in (0,3/2).$ In fact, we can
consider $b\in[0,2].$ We apply Theorem \ref{th:wil} with
$W(x)=2x^2\ge0.$ We get that $B(x)=x^3-bx,$ and
\begin{align*}
\mathcal{B}&=\{x\in\R\,:\, x^2(x^2-2-b)(x^2+2-b)\ge0\}\\&=
\big(-\infty,-\sqrt{2+b\,}\,\big]\cup\{0\}\cup\big[\sqrt{2+b\,},\infty\big).\end{align*}
Hence, the number of bounded connected components of
$\mathcal{B}$ is one and, as a consequence, $L(V)=2$ and the
maximum number of limit cycles is~three. Also, from the proof we
know that if the three limit cycles  exist, one
is~$\mathcal C,$ there is at most another one, say~$\gamma,$
surrounded by~$\mathcal{C},$ and a third one~$\Gamma,$
surrounding~$\mathcal C$.

To reduce this upper bound of three limit cycles by one it
suffices to consider the stability of the origin, the infinity, the
possible limit cycles and the invariant set $\mathcal{C}.$ In fact
we have that,
\begin{enumerate}[(A)]

\item The stability of the origin is given by the sign of $b.$ Moreover,
it is not difficult to see that when
$b\in(\underline{b},1-\underline{b})$ the origin is the only
equilibrium point of the system and that, otherwise, there are also
other equilibrium points, but all of them are on $\mathcal{C}.$

\item The set $\mathcal C,$ which is always invariant by the flow,
is  a limit cycle if and only if $b\in(\underline b,\overline b).$
Moreover it is hyperbolic and stable if $b\in(\underline{b},b^*),$
hyperbolic and repeller if $b\in(b^*,\overline{b}),$ and semi-stable
and double when $b=b^*.$ In fact, in this later case it is repeller
from its interior and attractor from its exterior, see
\cite{GasSab2019}. Moreover, it is also proved in that paper, that
when $b\ge \overline b,$ the set $\mathcal C,$ that it is no more a
periodic orbit, is also a repeller.

\item The infinity is repeller for $b>0,$ see again \cite{GasSab2019}.

\item For $b\in(0,2),$ whenever they exist, $\gamma$ is  hyperbolic and repeller
 and~$\Gamma$ is hyperbolic and attractor. This is a consequence of
Theorem~\ref{th:wil}, because their respective stabilities are
controlled by the sign of~$VW,$ that coincides with the sign of
$1-x^2-y^2,$ because
\[
VW=2x^2(1-x^2-y^2)(x^2+y^2+(x^3-bx)y),
\]
and for these values of $b$ the limit cycles must lie
in the region $\{x^2+y^2+(x^3-bx)y>0\}$ because it is the only
connected component~$\mathcal{U}$ of $\R^2\setminus\mathcal{V}$ with
$\ell(\mathcal{U})\ne0.$
\item For $b\ge 3/2,$ $\gamma$ never exits and $\Gamma$ is also
hyperbolic and atractor, because as we have proved above by using
Theorem \ref{th:bd}, its stability is also given by the sign of
\[
\qquad -V(x,y)M_{1/3}(x,y)=
-\frac13(x^2+y^2-1)^2\big((2b-3)x^2+b\big)\le0.
\]
\end{enumerate}
For instance we will  prove item $(v).$ All the other cases follow
similarly. First notice that by (B), $\mathcal C$ is
a hyperbolic and repeller limit cycle.  Recall that we already have proved that the system has at most one limit
cycle surrounded by $\mathcal {C},$ and another one surrounding
$\mathcal{C}.$ Moreover, whenever they exist they are hyperbolic and their stabilities are given in (D). By (A) and (C), since the origin is attractor and the infinity is repeller, we get
that there is no limit cycle surrounded by $\mathcal{C}$ and there
is exactly one limit cycle, hyperbolic and stable, surrounding
$\mathcal{C}.$

 \end{proof}

\begin{remark} System \eqref{eq:wilc} can be transformed into the
classical Li\'{e}nard system
\begin{equation}\label{eq:wilc2}
\begin{cases}
\dot x=y-bx+x^3+\frac{4b}3x^3-\frac65x^5,\\
\dot y=-x+b^2x^3-b(2+b)x^5+(1+2b)x^7-x^9.
\end{cases}
\end{equation}
By using Theorem \ref{th:bd} with $s=1$ and $V(x,y)=A(x,y)B(x,y)$
where
\begin{align*}
A(x,y)=&-225+225x^2+25b^2x^6-30bx^8+9x^{10}\\&+(150bx^3-90x^5)y+225y^2,\\
B(x,y)=&
225x^2-75b^2x^4+5b(24+5b)x^6-15(3+2b)x^8+9x^{10}\\&+(-225bx+25(9+6b)x^3-90x^5
)y+225y^3,
\end{align*}
we get that $M_{1}=2x^4A^2(x,y)\ge0.$ Hence, in these variables  an
upper bound of the number of limit cycles of system \eqref{eq:wilc}
can also be obtained. This example  illustrates that although
sometimes  it is difficult to find a suitable $V$ to apply Theorem
\ref{th:bd}, it seems to exist.
\end{remark}

\subsection{Some extended Li\'{e}nard systems}

We consider planar differential equations of the form
\begin{equation}\label{eq:lieg}
\begin{cases}
\dot x=y-|y|^{m}F(x),\\
\dot y=-G'(x)/2,
\end{cases}
\end{equation}
where $F$ and $G'$ are $\mathcal{C}^1$ functions satisfying $F(0)=0$
and $G(x)=x^{2k}+o(x^{2k}),$ $m\in\N\cup\{0\}$ and $k\in\N.$ Notice
that when $G(x)=x^2$ and $m=0,$  they include the classical second
order Li\'{e}nard equations $\ddot x+F'(x)\dot x+ x=0.$ The factor
$|y|^{m}$ is added following the recent work~\cite{VilZan2020},
where this interesting system was studied for the first time. Notice
that if instead of $y-|y|^mF(x)$ we consider the same system but
with the first component equal to $y-y^mF(x),$ then, when $m$ is
odd, it would be invariant by the change of variables and time
$(x,y,t)\to(x,-y,-t)$ and the origin would be a reversible center.

In all our study we skip the case $m=1,$ where the associated vector
field is not of class $\mathcal{C}^1.$ In any case, for $m=1,$ and
on each of the regions $y>0$ and $y<0,$ the vector field is
integrable (it corresponds to a differential equation of separated
variables) and by using the level curves of the corresponding first
integrals, their phase portraits are easier to be studied. This
approach is the one used in \cite{VilZan2020}  for this case, when
$G(x)=x^2.$

\begin{theorem}\label{th:nou} Consider the differential system \eqref{eq:lieg} with $m\ne1.$ If the function
$H:=(m-1)FG'+2F'G$ does not change sign and vanishes at isolated
points, then the system has at most $J$ limit cycles, all of them
hyperbolic, where $J$ is the  number of zeroes of $G'.$ In
particular, if~$G'$ only vanishes at the origin  the differential
system has at most one limit cycle.
\end{theorem}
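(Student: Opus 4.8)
The plan is to apply the Bendixson--Dulac theorem (Theorem~\ref{th:bd}) with a Dulac function built from the energy of the conservative truncation $\dot x=y,\ \dot y=-G'(x)/2$ of \eqref{eq:lieg}. That truncation has the first integral $E:=y^2+G(x)$, and this is the feature that makes everything tractable: along the full field \eqref{eq:lieg} one computes the clean identity $\dot E=-|y|^m F(x)G'(x)$, so the only surviving part of $\dot E$ is the ``$|y|^m$ part''. More generally, for $V=\beta(E)$ all the terms of $\dot V=V_xP+V_yQ$ that are linear in $y$ and free of the factor $|y|^m$ cancel; and imposing that these non-$|y|^m$ terms vanish in fact forces $V$ to depend on $E$ alone. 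I would therefore take $V$ to be a power of $y^2+G(x)$, with the exponent and the parameter $s$ tuned to $m$.

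Concretely I would use $s=2$ together with $V=(y^2+G)^{1-m}$, i.e.\ (in the language of Theorem~\ref{th:bdmc}) the Dulac function $B=|V|^{-1/s}=(y^2+G)^{(m-1)/2}$. Using $\dot E=-|y|^mFG'$ and $\operatorname{div} X=-|y|^mF'$, one gets
\[
\operatorname{div}(BX)=\dot B+B\,\operatorname{div} X=-\tfrac12\,|y|^m\,(y^2+G)^{(m-3)/2}\bigl(H+2F'y^2\bigr),
\]
with $H=(m-1)FG'+2F'G$. The bracketed factor reduces exactly to $H$ on the axis $\{y=0\}$, which is precisely why $H$ is the function whose sign has to be controlled. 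For the count, the equilibria of \eqref{eq:lieg} on $\{y=0\}$ are the points $(x_i,0)$ with $G'(x_i)=0$, one per zero of $G'$; removing them produces a region with $J$ holes, while no periodic orbit lies in the zero set of $V$ (so $N=0$), and the bound collapses to $J$. Hyperbolicity and the stability statement then follow verbatim from Theorem~\ref{th:bd}, since they are dictated by the sign of $\operatorname{div}(BX)$.

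The step I expect to be the real obstacle is showing that $\operatorname{div}(BX)$ does not change sign on the whole region, not merely on the axis. Off the axis its sign is that of $-(H+2F'y^2)$, so the hypothesis on $H$ controls only the slice $y=0$; one must still dominate the extra term $2F'y^2$ as $|y|\to\infty$. The natural lever is the reformulation $H=2G^{(3-m)/2}\bigl(FG^{(m-1)/2}\bigr)'$, valid where $G>0$, which says that the sign hypothesis on $H$ is exactly the monotonicity of $FG^{(m-1)/2}$; I would try to exploit this monotonicity, together with $F(0)=0$ and $G(x)=x^{2k}+\op(x^{2k})$, both to fix the global sign of $H+2F'y^2$ and to make the identification $L(V)=J$ rigorous (in particular to describe the bounded components of $\{V=0\}$ through the critical points of $G$). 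Making this off-axis sign analysis airtight, and reconciling it with the topology entering $L(V)$, is where the genuine difficulty lies; the remaining conclusions are then immediate from Theorem~\ref{th:bd}.
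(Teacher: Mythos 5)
Your computation of $\operatorname{div}(BX)$ is correct, but the gap you flag at the end is not a technical loose end to be tightened: it is fatal for your choice of Dulac function, and no off-axis analysis can close it. The hypothesis controls only $H=(m-1)FG'+2F'G$ and says nothing about the sign of $F'$, while the obstruction term $2F'y^2$ dominates the bracket $H+2F'y^2$ for large $|y|$ on every vertical line where $F'\ne0$. Test this on the flagship case the theorem must cover, the van der Pol equation: $m=0$, $G(x)=x^2$, $F(x)=\lambda(x^3/3-x)$. Then $H(x)=\tfrac43\lambda x^4$ has constant sign, so the theorem applies, yet
\[
H+2F'y^2=\lambda\Bigl(\tfrac43x^4+2(x^2-1)y^2\Bigr)
\]
changes sign (for $\lambda>0$ it is negative at $(0,1)$ and positive at $(2,0)$). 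Hence your $\operatorname{div}(BX)$ changes sign and Theorem~\ref{th:bdmc} cannot be invoked with $B=(y^2+G)^{(m-1)/2}$. The monotonicity reformulation $H=2G^{(3-m)/2}\bigl(FG^{(m-1)/2}\bigr)'$ gives no leverage here, because the sign defect occurs at points ($x$ near $0$, $|y|$ large) where that monotonicity is silent.

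The flaw traces back to your structural claim that killing the non-$|y|^m$ terms of $\dot V$ ``forces $V$ to depend on $E$ alone.'' That dichotomy is false: $V$ is only forced to be a function of $E$ \emph{modulo correction terms that themselves carry the factor $|y|^m$}, and this freedom is exactly what the paper exploits. The paper's proof takes $s=1$ and
\[
V(x,y)=G(x)+y^2-y|y|^{m}F(x),
\]
that is, the energy $E$ plus the cross term $-y|y|^mF$. Since this cross term carries $|y|^m$, it does not disturb the conservative structure, while its derivatives generate exactly the term $-y^2|y|^mF'$ that cancels your obstruction $+y^2|y|^mF'$ coming from the divergence; a short computation then gives $M_1=\tfrac12|y|^mH(x)$, whose sign is fully controlled by the hypothesis with nothing left over. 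Your topological count also needs repair: the system can have equilibria off the axis $\{y=0\}$ (when $F(x^*)\ne0$ at a zero $x^*$ of $G'$, the equation $y=|y|^mF(x^*)$ has nonzero solutions), so ``remove the $J$ equilibria'' does not describe the relevant region; the paper instead bounds the number of bounded components of $\{V=0\}$ by counting intersections with vertical lines via Descartes' rule of signs and pinning each such component, which must enclose an equilibrium, to a line $\{x=x^*\}$ with $G'(x^*)=0$.
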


\begin{proof} We apply the Bendixson--Dulac theorem with
$V(x,y)=G(x)+y^2-y|y|^{m}F(x)$ and $s=1.$ Simple computations give
that
\[
M_{1}=\frac12|y|^{m}H(x).
\]
Therefore,  since $M_{1}$ satisfies the hypothesis of the
Bendixson--Dulac theorem we have already proved that system
\eqref{eq:lieg} has at most $L_X(V)$ limit cycles.  We claim that
$L_X(V)\le J.$  Since the set $\mathcal{V}=\{V(x,y)=0\}$ does not
contain solutions of the differential system the claim will follow
if we prove that $\mathcal{V}$ has at most $J$ bounded
connected components, see Section \ref{ss:LV}. Notice that each of
these components can be an oval, an isolated point, or a more
complicated set.

To prove this last assertion we first count the number of points of
$\mathcal{V}\cap \{x=x_0\},$ taking into account their multiplicity,
and  we call it $K(x_0).$ When $m=0$ it is clear that $K(x_0)\le2,$
because $V(x_0,y)=0$ is a quadratic equation in $y.$ When $m\ge2,$
the equation $V(x_0,y)=0$ splits into two trimonomial equations, one
for $y\ge0$ and another one for $y\le0.$ By applying the Descarte's
rule of signs to both equations, since the monomial $y^2$ appears in
both, it can be seen that $K(x_0)\le3.$

Notice that since on $\mathcal{V},$ $M_1=\dot V,$ each bounded
connected component of $\mathcal{V}$ delimits some region either
positively or negatively invariant, and as a consequence its
interior must contain at least one equilibrium point $(x^*,y^*)$ of
the system. Observe also that even when the system has other
equilibrium points on the line $\{x=x^*\},$ only one connected
component of $\mathcal{V}$ can cut this line, because $K(x^*)\le3.$
Hence the bounded connected components of the set $\mathcal{V}$ must
cut the lines $\{x=x^*\},$ where $G'(x^*)=0,$ and at most one of
them cuts each of the lines. As a consequence, $\mathcal{V}$ has at
most $J$ bounded connected components, and $L(V)\le J$  as we wanted
to prove.
\end{proof}

 Theorem \ref{th:nou} can be applied to  several differential
systems~\eqref{eq:lieg}. For  $m$ and $G$ fixed,  let $W$ be a
function that does not change sign, vanishes at isolated points, and
such that the initial value problem for the  linear differential
equation
\begin{equation}\label{eq:linear}
(m-1)F(x)G'(x)+2F'(x)G(x)=W(x), \quad F(0)=0,
\end{equation}
has a regular solution $F.$ Notice that \eqref{eq:linear} is
singular at the zeroes of $G$ and we impose that $F$ must be smooth
at these points. Then the correspondent differential
system~\eqref{eq:lieg} is under the hypotheses of the theorem. By
using this point of view, we have obtained several families of
differential systems where it is easy to impose that their
corresponding functions $H$ do not change sign and, as a
consequence, Theorem \ref{th:nou} can be applied. We will skip all
the hypotheses that must be added to guarantee the desired property
for $H,$ and the other ones that  the functions $F$ and $G$ must
satisfy to fulfill all the other hypotheses of the theorem, because
the reader can easily figure out them. These families are:

\begin{enumerate}[(i)]

\item When
$F(x)=A^p(x)A'(x)B(x),$  $G(x)=c A^q(x)(A'(x))^2B^2(x),$ and $m=0.$
Then, it holds that
\[
H(x)=(2p-q)c A^{p+q-1}(x)(A'(x))^4B^3(x).
\]

\item
When $m=0,$
\begin{align*}
F(x)&=A^{2p}(x)A'(x)B^{q+1}(x),\,\,\,\mbox{and}\\G(x)&=c
A^{4p}(x)(A'(x))^2B^{q}(x),
\end{align*}
we get that
\[
H(x)=(q+2)c A^{6p}(x)B^{2q}(x)(A'(x))^3B'(x)
\]

\item When $m=2k,$  $G(x)=x^{2k},$ and
\[
F(x)= \frac12 x^{k(1-2k)}\int_0^x y^{k(2k+1)} Z(y)\,{\rm d} y,
\]
we obtain that $H(x)=x^{4k}Z(x).$

\item When $m=0,$ $F(x)=a(x^3/3-x)$ and
\[
G(x)=x^2-\Big(\frac{a^2}8+6b\Big)x^4+\Big(\frac{a^2}{48}+b\Big)x^6,
\]
we get that
\[
H(x)=\frac{a(16-3a^2-144b)}{12}x^4.
\]
\end{enumerate}

Now we will study in more detail some particular sub-cases of the
above families and we will refine the upper bound for their number
of limit cycles given in Theorem \ref{th:nou}.

We start with an example contained in the family
given in item~(i). It corresponds  to $p=1,$ $q=0,$ $c=1/4,$
$A(x)=x^3/3-x^2/2$ and $B(x)=-2,$
 and writes as
\begin{equation}\label{eq:1-3}
\begin{cases}
\dot x=y+\dfrac13 x^3(x-1)(2x-3),\\[0.2cm]
\dot y=-x(x-1)(2x-1),
\end{cases}
\end{equation}
with $G(x)=x^2(1-x)^2.$ We will prove that this system has at most one limit
cycle, hyperbolic and stable. The existence of this limit cycle,
that surrounds the three equilibrium  points of \eqref{eq:1-3}, can
be numerically confirmed.

By using Theorem \ref{th:nou} when $m=0$ and with
\[
V(x,y)=x^2(x-1)^2+\frac13x^3(x-1)(2x-3)y+y^2
\]
we get that $H(x)=-4x^4(x-1)^4<0$ and the maximum number of limit
cycles of the corresponding system is three, which is the number of
zeroes of $G'(x)=2x(x-1)(2x-1).$ This upper bound can be reduced to
two studying in more detail the set $\mathcal{V}.$ This set is
formed by two isolated critical points located at $(0,0)$ and
$(1,0)$, and two disjoint curves going from infinity to infinity.
The point $(0,0)$ is a weak focus and the
 point $(1,0)$ is a strong stable focus. The third critical point, located at $(1/2,-1/24)$, is a saddle point.
   By computing the first Lyapunov quantity associated to the weak focus at
   the origin we conclude that this point is repulsive. In fact,
$\R^2\setminus \mathcal{V}$ is formed by three open sets, two are
simply connected and the third one has two holes (the two critical
points located on the $x$ axes). In short $L(V)=2$ and since $\mathcal V$ does not contain
periodic orbits, the upper bound of two limit cycles follows from
Theorem \ref{th:bd}.

Finally, we prove that one is the actual upper bound for the number
of limit cycles of \eqref{eq:1-3}. By Theorem \ref{th:bd} the
stability of the limit cycles is given by the sign of $
-V(x,y)M_1(x)$ that coincides with the sign of $-H(x)>0.$ Hence all
of them are repelling hyperbolic limit cycles. By
using   the Poincar\'{e}--Bendixson theorem it can be seen that the only
situations compatible with these results are that either
\eqref{eq:1-3} has no limit cycle, or that it has exactly one, as we
wanted to show.

The same tools allow to prove that
\begin{equation*}
\begin{cases}
\dot x=y-b x^3(x-a)(2x-3a),\\
\dot y=-x(x-a)(2x-a),
\end{cases}
\end{equation*}
has at most one limit cycle.
 In fact, notice that when $b=0$
it can be easily integrated. It has two centers and a saddle point,
and the separatrices of this saddle point  form two homoclinic
trajectories which, together with the critical point, have an eight
shape. Numerically, the limit cycle seems to bifurcate for
$b\approx0$ from this double loop and whenever it exists, it
surrounds the three equilibrium points of the system, a saddle, a
strong focus and a weak focus, both with different stabilities.

Similar examples to system \eqref{eq:1-3}, with more equilibrium
points surrounded by a limit cycle and for which Theorem
\ref{th:nou} also works are not difficult to be constructed. For
instance, if we take $m=0,$ $F(x)=cx^3(1-x)(2-x)^3$ and
$G(x)=x^2(1-x)^2(2-x)^2,$ with $|c|<2$, we get a system with five critical points,
two saddles and three foci, for which $H(x)=8cx^4(1-x)^4(2-x)^4.$

\smallskip

Also, because it contains the classical van der Pol differential
equation, we particularize in detail a subfamily of the one given
in item~(ii). If we consider $p=q=0,$ $c=1,$ and $A'=C$ in $(ii)$ we
get the following result.

\begin{cor}\label{co:vdp} Consider the  $\mathcal{C}^1$ differential
system
\begin{equation*}
\begin{cases}
\dot x=y-C(x)B(x),\\
\dot y=-C(x)C'(x),
\end{cases}
\end{equation*}
with $C(0)=0$ and $C'(x)\ne0$ for $x\ne0.$ If $C(x)B'(x)$ does not
change sign and vanishes at isolated points, then this system has at most one
limit cycle and when it exists it is hyperbolic.
\end{cor}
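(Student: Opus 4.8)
The plan is to derive this corollary directly from Theorem~\ref{th:nou} by recognizing the system as an instance of the extended Li\'{e}nard family \eqref{eq:lieg} with $m=0$. Setting $m=0$, the first equation $\dot x = y - C(x)B(x)$ matches $\dot x = y - |y|^0 F(x) = y - F(x)$, so I would identify $F(x) = C(x)B(x)$. The second equation $\dot y = -C(x)C'(x)$ must match $\dot y = -G'(x)/2$, so I would set $G'(x) = 2C(x)C'(x)$, which integrates to $G(x) = C^2(x)$ (up to an additive constant, which I can take to be zero so that $G(x) = x^{2k} + o(x^{2k})$ holds with the appropriate leading behavior of $C$). The hypotheses $F(0) = C(0)B(0) = 0$ follows from $C(0)=0$, as required.

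Next I would compute the auxiliary function $H$ from Theorem~\ref{th:nou}. With $m=0$, we have $H = (m-1)FG' + 2F'G = -FG' + 2F'G$. Substituting $F = CB$, $G = C^2$, and $G' = 2CC'$, the plan is to show by the product rule that $H$ simplifies to a clean expression proportional to $C(x)B'(x)$. Expanding: $-FG' = -CB\cdot 2CC' = -2C^2 C' B$, while $2F'G = 2(C'B + CB')C^2 = 2C^2 C' B + 2C^3 B'$. Adding these, the $2C^2C'B$ terms cancel and I am left with $H = 2C^3(x)B'(x)$. Since $C^2(x)\ge 0$ always and $C(x)$ has the same sign as $C^3(x)$, the sign of $H$ agrees with the sign of $C(x)B'(x)$ away from the zeros of $C$; more precisely $H = 2C^2 \cdot (CB')$, so $H$ does not change sign and vanishes only at isolated points exactly when $C(x)B'(x)$ does, given that $C'(x)\ne0$ for $x\ne0$ forces $C$ to vanish only at isolated points.

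I would then verify the remaining hypotheses of Theorem~\ref{th:nou}. The condition $m\ne 1$ holds since $m=0$. The count of zeros of $G'$: since $G'(x) = 2C(x)C'(x)$ and $C'(x)\ne 0$ for $x\ne 0$ while $C(0)=0$, the only zero of $G'$ is at $x=0$, so $J = 1$. Thus Theorem~\ref{th:nou} yields at most one limit cycle, and it is hyperbolic. The main point requiring a little care is confirming that $G(x) = C^2(x)$ satisfies the structural requirement $G(x) = x^{2k} + o(x^{2k})$ with $k\in\N$: this needs $C$ to vanish to finite order at the origin, which is implied by the smoothness assumptions together with $C'(x)\ne 0$ for $x\ne 0$ (so $C$ is strictly monotone on each side of the origin and vanishes only there). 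I expect the only genuine obstacle to be a clean bookkeeping check that the non-sign-changing and isolated-vanishing conditions on $C(x)B'(x)$ transfer correctly to $H$, which the factorization $H = 2C^2\cdot CB'$ resolves transparently.
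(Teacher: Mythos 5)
Your proposal is correct and is essentially the paper's own proof: the paper obtains this corollary as the case $p=q=0$, $c=1$, $A'=C$ of its family (ii), which is exactly your identification $F=CB$, $G=C^2$, giving the same $H=2C^3B'=2C^2\,(CB')$, with the Rolle-type observation that $C$ (hence $G'=2CC'$) vanishes only at $x=0$, so $J=1$ in Theorem~\ref{th:nou}. The only blemish is your parenthetical claim that $\mathcal{C}^1$ regularity plus $C'(x)\neq0$ for $x\neq0$ forces $C$ to vanish to finite order at the origin (false, e.g.\ for $C(x)=x\,e^{-1/x^2}$, so the normalization $G(x)=x^{2k}+o(x^{2k})$ need not hold literally), but the paper's proof silently glosses over this same technical hypothesis, so it does not separate your argument from theirs.
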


 Notice that the van der Pol equation corresponds to
 $C(x)=x$ and $B(x)=\lambda(x^2/3-1).$ Then
 $C(x)B'(x)=2\lambda x^2/2$, which does not change sign.

\begin{proof}[Proof of Corollary \ref{co:vdp}]
For these particular cases of differential systems contained in the family (ii) we get that
\[
H(x)=2C^{3}(x)B'(x).
\]
Hence, it does not change sign and vanishes at isolated points.
Notice that~$C$ only vanishes at $x=0,$ because if $C(z)=0,$ by
Rolle's theorem $C'$ would vanish at a point between $0$ and $z.$
Hence, $G'(x)=2C(x)C'(x)=0$ only at $x=0,$ and the corollary
follows. Observe also that in this case
$V(x,y)=C^2(x)+y^2-yC(x)B(x),$ and the set $\{V(x,y)=0\}$ has only one bounded connected component, the
origin, and then $L_X(V)=1.$
\end{proof}

\begin{remark}\label{re:vdp} For completeness we reproduce a second easy proof
of the uniqueness and hyperbolicity of the limit cycle of the van der Pol
equation attributed to Cherkas in \cite[p. 105]{Chi2006}. Write the
equation as the system
\begin{equation*}
\begin{cases}
\dot x=y,\\
\dot y=-x-\lambda(x^2-1)y.
\end{cases}
\end{equation*}
By applying the Bendixson--Dulac theorem with $V=x^2+y^2-1$ and
$s=2$ we get that $M_{2}=2\lambda(x^2-1)^2.$   Clearly, the unit
circle is not a periodic orbit of the system, and  $\{V(x,y)=0\}$
has two connected components, one bounded  and simply connected and
a second one, say~$\mathcal{U},$ with
$\ell(\mathcal{U})=1.$ Hence $L_X(V)=1$ and the result follows.
\end{remark}

When $b=0,$ the system introduced in item (iv)
corresponds to the Wilson Li\'{e}nard equation (\cite{Wil1964})  and
when $|a|<2$ it has the algebraic limit cycle
\[
y^2-\frac a6 x^3y+\frac1{144}(a^2x^6+144x^2-576)=0.
\]
Since this limit cycle is also hyperbolic we get that for $|b|$
small enough the limit cycle persists and our theorem applies to get
an upper bound of the total number of limit cycles of the system
when $b\neq0$. We skip more details  because the study of this
system is quite similar to the one that we did for system
\eqref{eq:1-3}.

We end this section studying in more detail the particular family of
differential systems of the form \eqref{eq:lieg},  introduced in
\cite{VilZan2020},
\begin{equation}\label{eq:vil}
\begin{cases}
\dot x=y-\lambda |y|^{m}(x^3-x),\\
\dot y=-x,
\end{cases}
\end{equation}
where $m\ge2$ is an integer number and $\lambda\in\R.$  Notice that
the factor $x^3-x$ in \eqref{eq:vil} can be changed by $c^2x^3-x,$
with another value of $\lambda,$ obtaining the same phase portrait.
This is so, because by doing the change of variables
$(x,y)\to(cx,cy),$ with $c>0,$ the first equation writes as $\dot
x=y-c^{m}\lambda|y|^{m}(c^2x^3-x)$ and the second one remains
invariant.  We do not take the factor as $x^3/3-x,$ which
corresponds to the van der the Pol equation when $m=0,$ simply to keep
the notation of \cite{VilZan2020}. We prove:

\begin{theorem}\label{th:vil} Consider the differential system \eqref{eq:vil} with $m\in\N,$ and $m\ge2.$ Then, the following holds:
\begin{enumerate}[(i)]

\item For  $|\lambda|\ne0$ small enough it has at least one limit cycle.

\item For $|\lambda|\ge \dfrac3{\sqrt 2}\Big(\dfrac 3 m\Big)^{m/2}$ it
has no limit cycle.

\end{enumerate}
\end{theorem}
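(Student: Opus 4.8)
The plan is to prove the two parts by quite different means, exploiting that part (i) is a local bifurcation statement while part (ii) is a global non-existence statement tailored to the Bendixson--Dulac machinery developed above.

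For part (i), I would argue that for $\lambda=0$ the system reduces to the linear center $\dot x=y,\ \dot y=-x,$ whose orbits are the circles $x^2+y^2=\text{const}.$ The idea is to show that switching on a small $\lambda\ne0$ destabilizes the origin and produces a Hopf-type or Poincar\'{e}--Andronov bifurcation of a limit cycle out of the center. Concretely, I would compute the derivative of the return map (equivalently, the first nonzero Lyapunov-type coefficient) at the origin, or more robustly use an averaging/Melnikov argument along the unperturbed circles. Integrating the divergence of the vector field along a circle of radius $r$ detects the first-order displacement: the divergence is $\partial_x(y-\lambda|y|^m(x^3-x))+\partial_y(-x)=-\lambda|y|^m(3x^2-1),$ so the averaged quantity is proportional to $\lambda\int_0^{2\pi}|\sin\theta|^m(3r^2\cos^2\theta-1)\,r\,{\rm d}\theta,$ which for small $r$ is dominated by the $-1$ term and hence is a nonzero multiple of $\lambda.$ A sign change of this averaged function between small radii (where it is negative) and large radii (where the $3x^2$ term dominates) forces an isolated zero, which by the standard Melnikov/averaging theorem persists as a limit cycle for $|\lambda|\ne0$ small. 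The main delicacy here is the reduced smoothness caused by the factor $|y|^m$; since $m\ge2$ the vector field is $\mathcal{C}^1$ (indeed $\mathcal{C}^{m-1}$), so first-order averaging applies, but I would check that the lowered regularity does not obstruct the bifurcation theorem.

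For part (ii), the plan is to invoke Theorem~\ref{th:nou} directly: system \eqref{eq:vil} is of the form \eqref{eq:lieg} with $F(x)=\lambda(x^3-x)$ and $G(x)=x^2,$ so $G'(x)=2x$ vanishes only at the origin and $J=1.$ Then $H=(m-1)FG'+2F'G=\lambda\big[(m-1)(x^3-x)\cdot 2x+2(3x^2-1)x^2\big]=2\lambda x^2\big[(m-1)(x^2-1)+3x^2-1\big]=2\lambda x^2\big[(m+2)x^2-m\big].$ This $H$ does change sign in general, so Theorem~\ref{th:nou} does not immediately give non-existence; the quadratic factor $(m+2)x^2-m$ is negative near $x=0$ and positive for large $|x|.$ The strategy is therefore to use a \emph{shifted} Dulac function, replacing $V$ by $\widetilde V=G(x)+y^2-y|y|^mF(x)+\text{const},$ or more naturally to apply Theorem~\ref{th:bd} directly with a $V$ adapted to a confining region. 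The non-existence must come from the hypothesis $|\lambda|\ge \frac{3}{\sqrt2}(3/m)^{m/2}$: I expect the bound is exactly the threshold at which $M_1$ (or the relevant divergence integrand $-\frac1s|V|^{-1/s-1}M_s$) can be forced to keep a single sign on the whole plane by absorbing the bad quadratic term against the $|y|^m$ weight.

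The heart of part (ii), and the step I expect to be the main obstacle, is extracting the explicit constant $\frac{3}{\sqrt2}(3/m)^{m/2}.$ I anticipate it arises from an optimization: writing $M_1=\frac12|y|^mH(x)$ is not enough because $H$ changes sign, so one must instead keep the full expression $M_1=V_xP+V_yQ-(P_x+Q_y)V$ and bound the contribution of $|y|^m(x^3-x)$ terms. The key estimate should be a one-variable maximization of a function like $x\mapsto |x^3-x|$ or, after the scaling remark preceding the theorem, of $|y|^m$ against a polynomial, and the factor $(3/m)^{m/2}$ is the typical signature of maximizing $t^{m/2}(\text{something})$ via calculus (setting a derivative to zero gives $t=3/m$-type critical points). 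So the concrete plan is: (1) fix $V$ (plausibly $V=x^2+y^2-1$ or the $V$ from Theorem~\ref{th:nou}) and $s=1$; (2) compute $M_1$ exactly; (3) reduce non-change-of-sign of $M_1$ to the positivity of a function of a single auxiliary variable; (4) find the sharp $\lambda$-threshold by elementary calculus, recovering $\frac{3}{\sqrt2}(3/m)^{m/2}$; and (5) conclude via Theorem~\ref{th:bd} that $L_X(V)=0,$ so there are no limit cycles. I would expect the optimization in step (4) to be where the stated constant is pinned down and hence where the real work lies.
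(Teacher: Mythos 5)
Your overall strategy coincides with the paper's (a Melnikov argument from the linear center for (i), Bendixson--Dulac with a tailored $V$ for (ii)), but both parts have genuine gaps. In part (i), the quantity you compute is not the first-order displacement. For the perturbation $(R,S)=\big(-|y|^m(x^3-x),0\big)$ of the center $\dot x=y,\ \dot y=-x$, the displacement along the circle of radius $r$ is, to first order in $\lambda$, the Melnikov integral $M(r^2)=\oint_{x^2+y^2=r^2}S\,{\rm d}x-R\,{\rm d}y=\oint|y|^m(x^3-x)\,{\rm d}y$, which by Green's theorem equals the integral of the divergence $R_x+S_y$ over the whole disc, \emph{not} its line integral along the circle. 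Your line integral is (up to sign and a positive factor) the derivative ${\rm d}M/{\rm d}r$: its unique positive zero is at $3r^2=m+2$, while the true Melnikov function, computed in the paper, is proportional to $r^{m+2}\big(3r^2-(m+4)\big)$ and vanishes at $3r^2=m+4$. Invoking the persistence theorem at a zero of $M'$ rather than at a simple zero of $M$ is not valid. The gap is repairable: either compute $M$ itself in polar coordinates (as the paper does), or keep your integrand and argue through the double integral that $M<0$ for small $r$, that $M$ decreases for $3r^2<m+2$ and increases afterwards with $M\to+\infty$, so it has exactly one positive zero, necessarily simple.

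Part (ii) is where the real content lies, and there the proposal stops exactly at the decisive step: producing the Dulac function. Both candidates you name fail for \emph{every} $\lambda$, not just for some. With $V=x^2+y^2-1$ and $s=1$ a short computation gives $M_1=\lambda|y|^m\big((x^2-1)^2+(3x^2-1)y^2\big)$, which changes sign (set $x=0$); with the $V$ of Theorem~\ref{th:nou} one gets $M_1=\lambda|y|^m x^2\big((m+2)x^2-m\big)$, same problem; and your shifted candidate $V+c$ gives $M_1=\lambda|y|^m\big((m+2)x^4+(3c-m)x^2-c\big)$, which requires $c\le0$ and $9c^2+(8-2m)c+m^2\le0$, impossible for $m\ge2$ since that quadratic in $c$ has discriminant $-32(m+2)(m-1)<0$. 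There is a structural reason these attempts cannot work: in each case $M_1$ is exactly homogeneous of degree one in $\lambda$, so whether it changes sign is independent of $|\lambda|$, and no threshold of the form $|\lambda|\ge\frac3{\sqrt2}(3/m)^{m/2}$ can possibly emerge. The paper's key idea is a Dulac function depending \emph{nonlinearly} on $\lambda$, namely $V=\exp\big(\frac{\lambda^2y^{2m}}{9m}\big)\big(3+\lambda xy|y|^{m-2}\big)$ with $s=1/3$, for which $M_{1/3}=-\frac19\exp\big(\frac{\lambda^2y^{2m}}{9m}\big)\lambda x^2|y|^{m-2}\big(2\lambda^2y^{2m}-27y^2+9(m-1)\big)$; here $\lambda^2$ multiplies the stabilizing term $y^{2m}$, and requiring $z^m-\frac{27}{2\lambda^2}z+\frac{9(m-1)}{2\lambda^2}\ge0$ for $z=y^2\ge0$ is precisely the one-variable calculus optimization you anticipated in your step (4), which pins down the stated constant; then $L_X(V)=0$ because $\{V=0\}$ contains no ovals. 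So your intuition about where the constant comes from was correct, but without this $\lambda$-dependent, exponential-weighted $V$ and the value $s=1/3$, the proof of (ii) does not get off the ground.
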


\begin{proof}  Notice that
the case $\lambda=0$ corresponds to a linear center, and the phase
portrait when $\lambda<0$ can be easily obtained from the one with
$\lambda>0,$ simply by doing the change of variables and time
$(x,y,t)\to(x,-y,-t).$ Then, it suffices to make the proof for the
case $\lambda>0$.

\smallskip

 $(i)$ Given any $\mathcal{C}^1$ perturbed
Hamiltonian systems,
\begin{equation}\label{eq:gen}
 \begin{cases}
     \dot x=\phantom{-}\dfrac{\partial H(x,y)}{\partial y}+\varepsilon R(x,y),
     \\[10pt] \dot y=-\dfrac{\partial H(x,y)}{\partial x}
     +\varepsilon S(x,y),
 \end{cases}
\end{equation}
where $\varepsilon$ is a small parameter, its associated
Melnikov--Poincar\'{e}--Pon\-trya\-gin function is
\[
M(h)=\int _{\gamma(h)} S(x,y)\,{\rm d}x-R(x,y)\,{\rm d}y,
\]
where  the curves $\gamma(h),$ for $h\in (h_0,h_1),$ form a
continuum of ovals contained in $\{H(x,y)=h\}.$  It is known that
each simple zero $ \bar h\in (h_0,h_1)$ of $M$ gives rise to a limit
cycle of \eqref{eq:gen} that tends, when $\epsilon\to0,$ to
$\gamma(\bar h),$ see for instance \cite{ChiLi2007,DumLliArt2006}.

Consider the differential system \eqref{eq:vil} with
$\lambda=\varepsilon.$ By applying the above result with
$H(x,y)=x^2+y^2=h=r^2,$ with $r\in(0,\infty),$ and taking the
parameterization of the level sets as $x=r\cos\theta,$
$y=r\sin\theta,$ we get that
\begin{align*}
M(r^2)&=\int _{x^2+y^2=r^2} |y|^{m}(x^3-x)\,{\rm
d}y\\&=\int_0^{2\pi}\!\!\!
r^{m}|\sin\theta|^m(r^4\cos^4\theta-r^2\cos^2\theta)\,{\rm
d}\theta\\
&=\frac{\sqrt{\pi}}{2}\frac{\Gamma\big({(m+1)}/{2}\big)}{\Gamma\big({(m+6)}/{2}\big)}r^{m+2}\big(3r^2-(m+4)\big),
\end{align*}
where $\Gamma$ is the Euler Gamma function. Hence, for each $m,$
this function has a simple positive zero $r=\sqrt{{(m+4)}/{3}}$,
that gives rise to the desired limit cycle.

\smallskip

$(ii)$ We will apply Theorem \ref{th:bd} with $s=1/3$ and
\[
V(x,y)=\exp\left(\frac{\lambda^2y^{2m}}{9m}\right)\left(3+\lambda x
y|y|^{m-2}\right).
\]
Some calculations give that
\[
M_{1/3}=-\frac19 \exp\left(\frac{\lambda^2y^{2m}}{9m}\right){\lambda
x^2|y|^{m-2}}\left( 2\lambda^2y^{2m}-27y^2+9(m-1) \right).
\]
We need that $M_{1/3}$ does not change sign. Hence, writing $y^2=z$
we want that
\begin{equation}\label{eq:con0}
z^m-\frac{27}{2\lambda^2}z+\frac{9(m-1)}{2\lambda^2}\ge0\quad\mbox{for}\quad
z\ge0.
\end{equation}

Let us prove, that given a real polynomial $P(z)=z^m+bz+c,$ with
$m\ge2,$ it holds that $P(z)\ge0$ for all $z\ge0$ if and only if
\begin{equation}\label{eq:con}
b\ge -m\Big(\frac c{m-1}\Big)^{(m-1)/m}.
\end{equation}
Since $P(0)=c,$ an obvious first condition is that $c\ge0.$ When
$b\ge0$ the result is trivial. When $b<0,$ since $p'(z)=mz^{m-1}+b,$
the function $P$ has a minimum at $z=z_0= (-b/m)^{1/(m-1)}.$ By
imposing that $P(z_0)\ge0,$ \eqref{eq:con} follows after some
straightforward computations.

Condition \eqref{eq:con} applied to the polynomial \eqref{eq:con0}
gives that
\[
-\frac{27}{2\lambda^2}\ge -m\left(\frac
9{2\lambda^2}\right)^{(m-1)/m}.
\]
After some manipulations we get that this inequality is equivalent
to the one given in the statement.

Hence, we are under the hypotheses of Theorem \ref{th:bd}. Moreover,
since $\{V(x,y)=0\}$ does not contain ovals, and all the connected
components of $\R^2\setminus\{V(x,y)=0\}$ are simply connected, we
have that $L_X(V)=0$ and the system has no limit cycle, as we wanted
to prove.
\end{proof}

Similar computations that the ones of item (i) are done in the
Appendix of \cite{VilZan2020} for the case $m=2$.

\begin{remark} The result of item (ii) of Theorem \ref{th:vil} shows
that for any $m\ge2$ there exits a value $\lambda=\lambda^*(m)$ such
that for $|\lambda|\ge \lambda^*(m)$ system \eqref{eq:vil} has no
limit cycle. Moreover, it gives an upper bound of this value.

For $m=2$ it is not a sharp bound, because in \cite{VilZan2020} the
authors study numerically the system and they find that
$\lambda^*(2)\in(1.474,1.475),$ while our bound is $9\sqrt2/4\approx
3.182.$ Nevertheless, by using it, for $m$ big enough, we prove that
the limit cycles only exist for $\lambda$ in an extremely thin
interval of length $3\sqrt{2}\Big(\dfrac 3 m\Big)^{m/2}$ that
decreases exponentially when $m$ grows.\\
It is the first time that the authors see a proof of
the existence of this type of exponentially small intervals for the
presence of limit cycles.
\end{remark}

\subsection{About Massera's theorem}
Consider the classical Li\'{e}nard equation
\begin{equation}\label{eq:liec}
\begin{cases}
\dot x=y-F(x),\\
\dot y=-x,
\end{cases}
\end{equation}
with $F$ a class $\mathcal{C}^2$ function satisfying $F(0)=0.$ We
prove, in a very simple way, the following extension of the
classical Massera's theorem (\cite{Mas1954,San1951}), where the
hyperbolicity of the limit cycle is also guaranteed. Other authors
had already proved this hyperbolicity, see for
instance~\cite{GasGiaLli2009}.

\begin{theorem}\label{th:mas} Consider the differential system \eqref{eq:liec}. If the function
$xF''(x)$ does not change sign and vanishes at isolated points,
then it has at most one limit cycle and when it exists it is
hyperbolic.
\end{theorem}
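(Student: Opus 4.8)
The plan is to apply the curvature criterion of Theorem~\ref{le:cur} to system \eqref{eq:liec}, for which $P=y-F(x)$ and $Q=-x$. Since $P_{xx}=-F''(x)$ and all the other second order partial derivatives of $P$ and $Q$ vanish, the function $D$ of that theorem collapses to a single term, and a direct computation gives
\[
D=xF''(x)\,(y-F(x))^2 .
\]
Under the hypothesis that $xF''(x)$ does not change sign and vanishes only at isolated points, $D$ inherits the same properties, because the extra factor $(y-F(x))^2$ is nonnegative and vanishes only on the null set $\{y=F(x)\}$. Hence Theorem~\ref{le:cur} applies with $V=K=x^2+(y-F(x))^2+xF'(x)(y-F(x))$ (recall $K=Q\dot P-P\dot Q$), and the system has at most $L_X(V)=N+L(V)$ limit cycles, all those lying outside $\mathcal{V}=\{K=0\}$ being hyperbolic by Theorem~\ref{th:bd}. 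It remains to prove that $L_X(V)\le 1$.

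First I would check that $N=0$. Since $K=Q\dot P-P\dot Q$ is the cross product of $X$ with $\dot X$, the identity $K\equiv0$ along an orbit forces the velocity direction to be constant there, that is, the orbit is a straight segment. A periodic orbit cannot be a straight segment, so no periodic orbit is contained in $\mathcal{V}$ and $N=0$. Consequently every limit cycle lies outside $\mathcal{V}$, hence is hyperbolic, and the whole problem reduces to showing that $L(V)\le 1$, i.e., following Section~\ref{ss:LV}, that $\mathcal{V}=\{K=0\}$ has at most one bounded connected component.

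The hard part is precisely this topological count, and I would carry it out by the same device used in the proof of Theorem~\ref{th:nou}. The two key facts are: (a) on $\{x=0\}$ one has $K=y^2$, so $\mathcal{V}\cap\{x=0\}=\{(0,0)\}$; and (b) the origin is the only equilibrium of \eqref{eq:liec}. Let $\Sigma$ be a bounded connected component of $\mathcal{V}$. Since on $\mathcal{V}$ we have $\dot K=M_1=D$, which does not change sign, the flow crosses $\Sigma$ always in the same direction, so any bounded region delimited by arcs of $\Sigma$ is positively or negatively invariant; by the Poincar\'e--Bendixson theorem its closure must contain an equilibrium (a periodic orbit inside would itself surround one). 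Thus the origin lies in the closure of that region. If $\Sigma$ did not pass through the origin, the origin would be an interior point, and then the line $\{x=0\}$ would have to leave the enclosed bounded region, meeting $\mathcal{V}$ at a point with $y\ne0$, contradicting (a). Therefore every bounded component of $\mathcal{V}$ contains the origin; being pairwise disjoint, there is at most one, so $L(V)\le1$ and $L_X(V)\le1$, which proves the theorem.

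I expect the only delicate point to be this last step when $\Sigma$ is a pinched (figure-eight type) curve through the origin, a situation that really does occur; for instance for the van der Pol field with large damping, where $F'(0)<-2$, the set $\{K=0\}$ has a bounded lobe-pair pinched at the origin. The argument above handles it automatically: such a $\Sigma$ contains the origin, and the equilibrium forced by the invariant-region reasoning is exactly the pinch point on the boundary of each lobe, so no contradiction arises and the component is correctly counted only once.
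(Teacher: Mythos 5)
Your proposal is correct and takes essentially the same route as the paper: the same Dulac function $V=K$ from Theorem~\ref{le:cur} with $s=1$, the same computation $M_1=xF''(x)\,(y-F(x))^2$, and the same invariant-region/unique-equilibrium argument to show $\{K=0\}$ has at most one bounded connected component. Your two extra details---ruling out periodic orbits inside $\mathcal{V}$ via the zero-curvature (straight-orbit) argument, and using $\mathcal{V}\cap\{x=0\}=\{(0,0)\}$ to exclude nested components---are points the paper asserts or leaves implicit (it instead describes $\mathcal{V}$ explicitly through the two graphs $y=F(x)-\tfrac12xF'(x)\pm\tfrac12\sqrt{x^2\left((F'(x))^2-4\right)}$), so they are welcome refinements rather than a different approach.
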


\begin{proof} We will apply Theorem \ref{th:bd} with $V$ given by the function $K,$ defined in \eqref{eq:k}, associated to the curvature of the system, and
$s=1.$ By using the results of Theorem \ref{le:cur} when $P=y-F(x)$
and $Q=-x,$ we obtain that
\[
V=K= x^2+y^2+F^2-2yF+x\big(y-F\big)F'
\]
and $M_{1}=(y-F)^2xF''.$ Hence $M_{1}$ satisfies the hypothesis of
Theorem~\ref{th:bd}.  To end the proof we have to show that
$L_X(V)\le1.$ Since the set $\mathcal{V}=\{V(x,y)=0\}$
does not contain orbits of the system, it suffices to prove that
$\mathcal{V}$ has at most one bounded connected component, see
Section \ref{ss:LV}. Clearly the points of $\mathcal{V}$ lie on the
two curves
\[
y=F(x)-\frac12xF'(x)\pm\frac12\sqrt{x^2\big((F'(x))^2-4\big)}.
\]
Therefore the bounded connected components of $\mathcal{V}$ are
given by $x=0$ and the bounded subsets of $\R,$ where
$(F'(x))^2-4\ge0.$ These components are either positively or
negatively invariante by the flow of the system because
$M_1\big|_{\mathcal{V}}=\dot V$ does not change sign. Hence they
must surround some of the equilibrium points of the system. Since the origin is
the only equilibrium point, there is at most one of these
components. Hence, $L_X(V)\le1,$ as we wanted to prove.
\end{proof}

We want to emphasize the surprising simplicity of the proof of this classical theorem with the methods employed in this work.

\section{Rigid systems}\label{se:rig}

These systems write as
\begin{equation}\label{eq:rig}
\begin{cases}
\dot x=-y+xF(x,y),\\
\dot y=\phantom{-}x+yF(x,y),
\end{cases}
\end{equation}
where $F$ is an arbitrary smooth function. This name is due to the
fact that in the usual polar coordinates $(r,\theta)$ it holds that
$\dot \theta=1$ and, therefore, their flow rotates around the origin
with constant angular velocity, as a rigid rotation.
Despite their simplicity and the fact that they have the origin as
the unique equilibrium point, the control of the number of limit
cycles of these systems is far to be completely known. They were
introduced by Conti in \cite{Con1994} and studied by several
authors. We prove the following result for them:

\begin{theorem}\label{te:rig} Let $X$ be the vector field associated to
\eqref{eq:rig}. If $F$ is of class $\mathcal{C}^2$ and it holds that
\begin{equation}\label{eq:h}
H:=F_{xx}F_{yy}-F^2_{xy}\ge 0,
\end{equation}
and $H$ vanishes on a null measure set, then \eqref{eq:rig} has at
most $L_X(V)$ limit cycles, where
\begin{equation}\label{eq:vrig}
V=(x^2+y^2)\left(xFF_x+yFF_y+xF_y-yF_x-1-F^2\right)
\end{equation}
and $L_X(V)$ is defined in Theorem  \ref{th:bd}.
\end{theorem}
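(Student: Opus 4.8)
The plan is to reduce everything to Theorem~\ref{le:cur}, whose text already announces that it will be applied to rigid systems. The first claim I would establish is that the function $V$ in \eqref{eq:vrig} is exactly the curvature function $K$ of \eqref{eq:k} associated to \eqref{eq:rig}. Writing $P=xF-y$ and $Q=x+yF$, one has $P_x-Q_y=xF_x-yF_y$, $P_y=-1+xF_y$ and $Q_x=1+yF_x$; substituting into \eqref{eq:k} and collecting terms by the first derivatives of $F$, the coefficient of $F_x$ is $P(xQ-yP)$, that of $F_y$ is $Q(xQ-yP)$, and the remaining part is $-(P^2+Q^2)$. Using the two rigid identities $xQ-yP=x^2+y^2$ and $P^2+Q^2=(x^2+y^2)(1+F^2)$, these regroup precisely into
\[
K=(x^2+y^2)\big(xFF_x+yFF_y+xF_y-yF_x-1-F^2\big)=V.
\]
Hence $V=K$, and I may apply Theorem~\ref{le:cur} with $s=1$, for which $M_1=D$.

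The heart of the matter is then the factorization of $D$. For \eqref{eq:rig} one computes $P_{xx}-2Q_{xy}=xF_{xx}-2yF_{xy}$, $2P_{xy}-Q_{yy}=2xF_{xy}-yF_{yy}$, $P_{yy}=xF_{yy}$ and $Q_{xx}=yF_{xx}$. Inserting these into the expression for $D$ in Theorem~\ref{le:cur} and grouping by the second derivatives of $F$, each of the three coefficients contains the common factor $xQ-yP=x^2+y^2$, and what is left is the Hessian quadratic form of $F$ evaluated at the vector field itself:
\[
D=(x^2+y^2)\big(P^2F_{xx}+2PQ\,F_{xy}+Q^2F_{yy}\big).
\]
This is the step that makes the hypothesis \eqref{eq:h} relevant, and it is where the apparent miracle (the Hessian determinant governing the limit cycles) actually happens.

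It then remains to check that $D$ does not change sign and vanishes on a null measure set. The factor $x^2+y^2$ is nonnegative and vanishes only at the origin. For the bracket I would complete the square, using the identity $F_{xx}\big(P^2F_{xx}+2PQ\,F_{xy}+Q^2F_{yy}\big)=(F_{xx}P+F_{xy}Q)^2+H\,Q^2$, whose right-hand side is nonnegative precisely because $H\ge0$. Thus the bracket is a semidefinite quadratic form at every point, its sign equal to that of $F_{xx}$, and away from $\{H=0\}$ it vanishes only where $(P,Q)=0$, i.e.\ at the equilibrium. Since $H$ vanishes only on a null measure set, $D$ keeps a constant sign off a null set, and Theorem~\ref{le:cur} (equivalently Theorem~\ref{th:bd} with this $V$ and $s=1$) yields the bound of $L_X(V)$ limit cycles.

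The step I expect to be genuinely delicate is not any single computation—those are routine once the two regroupings above are spotted—but the passage from pointwise semidefiniteness of the Hessian form to a truly global nonvanishing-sign statement for $D$. Because the sign of the bracket tracks that of $F_{xx}$, one must rule out that $F_{xx}$ changes sign. The hypothesis $H\ge0$ forces any zero of $F_{xx}$ to satisfy $F_{xy}=0$ and hence to lie in the null set $\{H=0\}$, so the only conceivable sign change of $D$ occurs across $\{H=0\}$; verifying that this does not actually produce a global sign change is the point requiring care, and in concrete applications it is settled by inspecting $F_{xx}$ directly.
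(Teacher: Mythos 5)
Your proposal is correct and is essentially the paper's own proof: the paper likewise applies Theorem~\ref{le:cur} (that is, Theorem~\ref{th:bd} with $V=K$ and $s=1$), obtains the same $V$ of \eqref{eq:vrig}, and writes $M_1=D=(x^2+y^2)\bigl(AF^2+BF+C\bigr)$, whose bracket is exactly your Hessian form $F_{xx}P^2+2F_{xy}PQ+F_{yy}Q^2$; its sign argument --- the discriminant with respect to $F$ equals $-4(x^2+y^2)^2H\le 0$, while the extreme coefficients $A$ and $C$ are semidefinite quadratic forms in $(x,y)$ with discriminant $-4H\le 0$ --- is algebraically equivalent to your completed square $F_{xx}\bigl(F_{xx}P^2+2F_{xy}PQ+F_{yy}Q^2\bigr)=(F_{xx}P+F_{xy}Q)^2+HQ^2$. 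The delicate point you flag at the end (ruling out that the pointwise sign, which tracks $F_{xx}$, flips across the null set $\{H=0\}$) is passed over in silence by the paper as well, whose proof concludes that $M_1$ ``does not change sign'' directly from this pointwise semidefiniteness, so your attempt matches the published argument in both substance and in that one limitation.
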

\begin{proof} We  apply again Theorem \ref{th:bd} with $V=K$ and
$s=1,$ where $K$ is given in \eqref{eq:k}.  We can use the results
of Theorem \ref{le:cur} with $P=-y+xF$ and $Q=x+yF.$ We get that $V$
is as in \eqref{eq:vrig} and \begin{multline*}
M_{1}=D=(x^2+y^2)\Big(\big(x^2F_{xx}+2xyF_{xy}+y^2F_{yy}\big)F^2
\\+2\big((x^2-y^2)F_{xy}+xy(F_{yy}-F_{xx})\big)F\\
+\big(x^2F_{yy}-2xyF_{xy}+y^2F_{xx}\big)\Big).
\end{multline*}
To control the sign of $M_{1}$ we first remove  the factor
$x^2+y^2.$ Notice that the discriminant of the remaining part,
thinking it as a second degree polynomial in $F$, $AF^2+BF+C,$ is
$B^2-4AC=-4(x^2+y^2)^2H\le0.$ Moreover, looking to $A$ and $B$ as
 quadratic homogenous polynomials of the form $ax^2+bxy+cy^2,$ we get that
their corresponding discriminants coincide and are given by
$b^2-4ac=-4H\le0.$ Therefore, the condition \eqref{eq:h} implies
that $M_{1}$ does not change sign and vanishes only on a null
measure set and hence our result follows.
\end{proof}

Notice that the upper bound for the number of limit cycles given in
the above theorem essentially depends on the shape of the set
$\{V(x,y)=0\}.$ To get the actual value of $L_X(V)$ for each case
this set must be carefully studied. We present now a concrete
application when~$F$ is a quadratic polynomial.

\begin{cor}\label{co:rig} Consider the rigid cubic system \eqref{eq:rig}, with
$F=a+bx+cy+dx^2+exy+hy^2.$ If $4dh-e^2>0$ this system has at most one
limit cycle, and when it exists it is hyperbolic.
\end{cor}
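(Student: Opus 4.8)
The plan is to read off the hypothesis of Theorem~\ref{te:rig} from the coefficients of $F$ and then sharpen the resulting bound using the rigid structure. First I would compute $F_{xx}=2d$, $F_{xy}=e$, $F_{yy}=2h$, so that the quantity $H$ in \eqref{eq:h} is the constant $H=4dh-e^{2}$. Thus $4dh-e^{2}>0$ says exactly that $H>0$ everywhere (in particular $H$ never vanishes), so Theorem~\ref{te:rig} applies verbatim: the system has at most $L_X(V)$ limit cycles, all hyperbolic, with $V$ the explicit sextic \eqref{eq:vrig}, and the stability of each governed by the sign of $-VM_1$ on its connected component.

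It remains to show that $L_X(V)$ collapses to one, and here I would first make the geometry explicit. The condition $4dh-e^{2}>0$ means the quadratic part $F_2=dx^2+exy+hy^2$ of $F$ is a definite form of the sign of $d$; this is precisely what forces $M_1$ to be one-signed in the proof of Theorem~\ref{te:rig}, and a short check gives $\operatorname{sign}(M_1)=\operatorname{sign}(d)$. Factoring $V=(x^2+y^2)G$, one has $G(0,0)=-1-a^2<0$, so the origin is an isolated zero of $\mathcal V=\{V=0\}$, while the top-degree part of $G$ is $F_2^2$ and hence $G\to+\infty$ at infinity. Consequently the stability sign reduces to $-\operatorname{sign}(G)\operatorname{sign}(d)$ on each component of $\R^2\setminus\mathcal V$, and since the origin is the unique equilibrium of a rigid system every limit cycle surrounds it; being hyperbolic, consecutive nested cycles have opposite stability.

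The finishing move is to pass to polar coordinates, where \eqref{eq:rig} becomes $\dot r=rF(r\cos\theta,r\sin\theta)$, $\dot\theta=1$. Here I would read off the stability of the origin (the sign of $a=F(0,0)$) and of infinity (the sign of $d$, since $F\sim r^2F_2$ there), and then discard the outer cycles in the spirit of Corollary~\ref{cor:sab}(v): a cycle lying in the outermost, $G>0$ region would be forced by $-\operatorname{sign}(G)\operatorname{sign}(d)$ to have the same stability as infinity, which is impossible. The key structural input, paralleling the proof of Theorem~\ref{th:mas}, is that as a function of $r$ the radial field $rF=a\,r+(b\cos\theta+c\sin\theta)\,r^2+F_2(\cos\theta,\sin\theta)\,r^3$ is a cubic whose leading coefficient has the fixed sign of $d$; its convexity in $r$, together with the constancy of the linear coefficient $a$, is what ultimately pins the number of positive $2\pi$-periodic solutions, and hence the number of limit cycles, down to at most one.

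I expect the main obstacle to be precisely this last sharpening: turning the raw Bendixson--Dulac count $L_X(V)$ into the value one. The zero set $\{G=0\}$ need not be a single oval --- a ray from the origin may meet it several times --- so the reduction cannot rest on naively counting the bounded components of $\mathcal V$. Instead it must be driven by the global balance between the forced Bendixson--Dulac stability signs, the stabilities at the origin and at infinity, and the convexity of the radial cubic, in the same way that the single-equilibrium argument closes the proof of Theorem~\ref{th:mas}.
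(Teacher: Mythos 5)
Your first half is sound and matches the paper: $F_{xx}=2d$, $F_{xy}=e$, $F_{yy}=2h$ gives $H\equiv 4dh-e^2>0$, Theorem~\ref{te:rig} applies, $\operatorname{sign}(M_1)=\operatorname{sign}(d)$ (since the quadratic-in-$F$ expression for $M_1$ has definite leading and constant coefficients of the sign of $d$ and non-positive discriminant), every limit cycle surrounds the unique equilibrium, infinity is attracting or repelling according to the sign of $d$, and a cycle in the unbounded component of $\R^2\setminus\mathcal V$ is excluded by comparing its forced stability $-\operatorname{sign}(G)\operatorname{sign}(d)$ with the stability of infinity (this is exactly the paper's final dynamical step). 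The genuine gap is the step you explicitly defer: bounding the number of bounded connected components of $\mathcal V$. Without knowing that $\{G=0\}$ contains at most \emph{one} oval around the origin, $L_X(V)$ is uncontrolled, and your ``global balance'' cannot close the argument on its own: if $\{G=0\}$ had several nested ovals, the sign of $G$ (hence of $V$) would alternate between consecutive annular regions, so the Bendixson--Dulac stability $-\operatorname{sign}(G)\operatorname{sign}(d)$ of the potential cycle in each region would alternate as well; a chain of nested hyperbolic cycles with alternating stabilities is perfectly consistent with Poincar\'{e}--Bendixson and with the stabilities of the origin and of infinity, so no contradiction would appear. The counting step is therefore essential, and it is exactly what the paper does with a short algebraic argument you dismissed as unavailable: setting $y=0$ in the second factor $G$ of $V$ gives the quartic $d^2x^4+bdx^3+ex^2+(c-ab)x-(1+a^2)$, whose leading coefficient $d^2>0$ and constant term $-(1+a^2)<0$ have opposite signs, so it cannot have two positive and two negative roots; since every bounded component of $\mathcal V$ must surround the origin (as $\dot V=M_1$ is one-signed on $\mathcal V$) and hence must cut both half-axes $\{y=0,x>0\}$ and $\{y=0,x<0\}$, there is at most one oval besides the isolated point at the origin, giving $L_X(V)\le2$, after which your infinity argument removes one more.

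Your proposed substitute for this count --- ``convexity of the radial cubic'' --- does not work. In polar coordinates the equation is $dr/d\theta=ar+\beta(\theta)r^2+\gamma(\theta)r^3$ with $\beta(\theta)=b\cos\theta+c\sin\theta$ and $\gamma(\theta)=d\cos^2\theta+e\cos\theta\sin\theta+h\sin^2\theta$; its second derivative in $r$ is $2\beta(\theta)+6\gamma(\theta)r$, which changes sign for every fixed $r>0$ unless $b=c=0$, so the right-hand side is not convex in $r$ on $r>0$ and no ``at most one periodic solution by convexity'' conclusion is available. What \emph{is} true is that $4dh-e^2>0$ makes $\gamma(\theta)$ sign-definite, so the Pliss--Lins Neto bound of three periodic solutions (with multiplicity) for such Abel equations applies; combined with the symmetry $r(\theta)\mapsto-r(\theta+\pi)$ pairing positive and negative periodic solutions, this yields at most one positive periodic solution, hence at most one hyperbolic limit cycle. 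But that is precisely the ``totally different approach'' of \cite{GasProTor2005} that the paper contrasts itself with, and it rests on a nontrivial external theorem rather than on anything you derived.
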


This result is not new. It was proved in \cite{GasProTor2005} by
using a totally different approach: the authors transform the system
into a periodic Abel differential equation and then they apply know
results about these equations.  In that work it is also proved that
when $4dh-e^2<0$ there are systems with at least two limit cycles.
Our proof is different and self-contained. Another proof, based on
the study of the stability of the possible periodic orbits, is given
in \cite{FreGasGui2007}.

\begin{proof}  The function $H$ of
 Theorem \ref{te:rig} is $H(x,y)\equiv 4dh-e^2>0$ and hence the system
 has at most $L_X(V)$ limit cycles. Here
  \begin{multline*}
V(x,y)=(x^2+y^2)\Big(-1-a^2+(c-ab)x-(ac+b)y+e(x^2-y^2)+2(h-d)xy\\
+(bx+cy)(dx^2+exy+hy^2)+(dx^2+exy+hy^2)^2\Big).
 \end{multline*}
It is easy to verify that the set $\{V(x,y)=0\}$ does no contain
orbits of the system.

 As the origin is the unique finite critical
point of the system and $M_{1}$ does not change sign, the bounded connected components of the set
$\mathcal{V}=\{V(x,y)=0\}$ must surround the equilibrium point. In
principle,
 from the degree of $V(x,y)$ we can conclude that the maximum number of them is three,
 being the origin one of  these components. But it is easy to show that there are at most
 two bounded connected components. This is so, because if we take $y=0$ in the second factor of $V$ we obtain a
polynomial in $x$ of degree four,
  where the coefficient of $x^4$ and the independent term are of opposite sign. Then it is not possible to have two
   positive roots and two negative roots at the same time. Therefore, the number of connected components in the set $\mathcal{V}$ is at most two,
   one of them being the origin.

In the case where the second bounded connected component exists it
is not difficult to show that   a limit cycle exterior to it cannot
exist. The first step is to determine the stability of infinity.
Writing the system in polar coordinates it is possible to show that,
if $4dh-e^2>0$, the infinity is an attractor for $d>0$ and it is
repulsive for $d<0$. Moreover,  it can be seen by using that
$M_1\big|_{\mathcal{V}}=\dot V,$ that the flow associated to the
system traverses this second bounded component forward for $d>0$ and
inward for $d<0$. As between this bounded connected
component of $\mathcal V$ and infinity only one limit cycle can
exist, and if it exists it is hyperbolic, taking into account the
stability of infinity we conclude that no limit cycle exists in this
region. We conclude then that the system can have at most one limit
cycle. As the origin is an attractor  for $a<0$ and it is repulsive
for $a>0,$ the limit cycle appears via a Hopf bifurcation at the
origin and must be located  in the interior of the non trivial
bounded connected component of the set $\mathcal{V}.$ In conclusion,
the limit cycle exits if and only if $ad<0$ and it is unique.
\end{proof}
We end this work with a second corollary of Theorem \ref{te:rig}
that also covers  some non-polynomial rigid differential systems.

\begin{cor}\label{co:rig2} Consider the differential system
\eqref{eq:rig}, with
$F(x,y)=f(x)+g(y),$ where $f(x)=\sum_{k=0}^{2n} f_kx^k,$ with
$f_k\ge0, k\ge2$ and $f_{2n}>0$.  We assume that $f''(x)\ge0$, $g$ is of class $\mathcal{C}^2,$
with $g''(y)\ge0$ and it vanishes only at isolated points.
Then the system has at most two limit cycles. Moreover, if there exists
$R>0$ such that for all $(x,y)\in\R^2$ with $x^2+y^2\ge R^2$ it
holds that $F(x,y)\ge c>0,$ then the corresponding differential
system has at most one limit cycle, and when it exists, it is
hyperbolic.
\end{cor}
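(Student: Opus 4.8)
The plan is to apply Theorem~\ref{te:rig} with $F(x,y)=f(x)+g(y)$. First I would check the hypothesis \eqref{eq:h}. Since $F$ has separated variables, $F_{xy}=0$, so $H=F_{xx}F_{yy}=f''(x)g''(y)$. By assumption $f''(x)\ge0$ and $g''(y)\ge0$, hence $H\ge0$, and since $g''$ vanishes only at isolated points while $f''\ge0$ (with $f_{2n}>0$ forcing $f''$ not identically zero), the product $H$ vanishes on a null measure set. Thus Theorem~\ref{te:rig} applies and the system has at most $L_X(V)$ limit cycles, with $V$ given by \eqref{eq:vrig}. The whole corollary then reduces to bounding $L_X(V)$, i.e.\ to counting the bounded connected components of $\mathcal{V}=\{V(x,y)=0\}$, since $\mathcal{V}$ contains no periodic orbit (as in Corollary~\ref{co:rig}, one verifies that $M_1|_{\mathcal V}=\dot V$ does not change sign, so $N=0$).

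For the bound of two limit cycles, I would argue exactly as in the proof of Corollary~\ref{co:rig}. The origin is the unique finite equilibrium, and since $M_1$ does not change sign each bounded component of $\mathcal V$ must surround it. Writing $V=(x^2+y^2)\,\widetilde V(x,y)$, the bounded components are the origin together with the ovals of $\{\widetilde V=0\}$. To count these I would intersect with the line $y=0$: the restriction $\widetilde V(x,0)$ is a polynomial in $x$ whose leading coefficient is governed by $f_{2n}^2x^{4n}>0$ and whose constant term is $-1-F(0,0)^2<0$. Leading and constant terms of opposite sign prevent having equal numbers of positive and negative roots, which limits how many ovals can cross the positive and negative $x$-axis simultaneously; carrying out the Descartes-type sign count gives at most two bounded components (the origin plus one nontrivial oval), hence at most two limit cycles. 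The technical point to watch is that $\widetilde V$ is no longer a fixed low-degree polynomial but has degree $4n$, so the sign argument must be phrased in terms of the signs of the extreme coefficients rather than a case-by-case root count.

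For the sharper conclusion of at most one limit cycle under the coercivity hypothesis $F(x,y)\ge c>0$ for $x^2+y^2\ge R^2$, I would study the behavior at infinity. In polar coordinates $\dot r=rF$, so for large $r$ one has $\dot r\ge cr>0$ and infinity is a repeller; moreover any bounded nontrivial component of $\mathcal V$ is crossed monotonically by the flow in a fixed direction, determined again by the sign of $\dot V=M_1$ there. Combining the stability of infinity with the monotone crossing of the outer component of $\mathcal V$, no limit cycle can lie in the unbounded region exterior to that component, exactly as in the last paragraph of the proof of Corollary~\ref{co:rig}. This leaves at most one limit cycle, located in the interior of the nontrivial bounded component of $\mathcal V$, and it is hyperbolic by Theorem~\ref{th:bd} since it is not contained in $\mathcal V$.

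The main obstacle I anticipate is the coefficient-sign analysis of $\widetilde V(x,0)$ in the degree-$4n$ case: unlike the cubic situation of Corollary~\ref{co:rig}, where the relevant polynomial was quartic and one could inspect roots directly, here I must extract from the general expression \eqref{eq:vrig} enough structural information about the extreme coefficients (the $x^{4n}$ term coming from $f(x)^2$ and the constant term $-1-a^2$ type contribution) to run the argument uniformly in $n$. Verifying that no additional bounded components can be created by the higher-order terms, and that the "opposite signs of the extremal coefficients" genuinely forces at most one nontrivial oval rather than merely bounding the total root count, is where the care is needed.
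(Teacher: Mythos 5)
Your overall strategy is the same as the paper's: apply Theorem~\ref{te:rig} (the computation $H=f''(x)g''(y)\ge0$ is exactly the paper's first step), reduce to counting the bounded connected components of $\mathcal{V}=\{V=0\}$, count them by restricting to the $x$-axis, and obtain the refined bound via the behaviour at infinity as in Corollary~\ref{co:rig}. The gap is in the counting step. You propose to use only the signs of the extreme coefficients of $\widetilde V(x,0)$ (leading coefficient positive, constant term negative). For a polynomial of degree $4n$ with $n\ge2$ this information is far too weak: it only forces the number of positive real roots and the number of negative real roots to be odd, which is compatible with, say, three positive and three negative roots, hence with three nested ovals surrounding the origin and $L_X(V)=4$. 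The quartic argument of Corollary~\ref{co:rig} (``not two positive and two negative roots simultaneously'') is special to degree four and does not extend ``uniformly in $n$''; you flag this point as the one needing care, and indeed it is not a technicality but the place where the proposal breaks down. A telltale symptom is that your sketch never uses the hypothesis $f_k\ge0$ for $k\ge2$, which is in the statement precisely to rescue this step.

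What the paper does instead is to control \emph{all} the coefficients, not just the extreme ones. Normalizing $g(0)=0$, one gets $V(x,0)=x^2W(x)$ with $W(x)=xf(x)f'(x)+g'(0)x-1-f^2(x)=f(x)\big(xf'(x)-f(x)\big)+g'(0)x-1$. Since $xf'(x)-f(x)=\sum_k(k-1)f_kx^k$, the sign hypotheses on the $f_k$ yield $W(x)=\sum_{k=0}^{4n}w_kx^k$ with $w_k\ge0$ for all $k\ge2$ and $w_0=-1-f^2(0)<0$; hence the coefficient sequence of $W$ has exactly one sign change whatever the sign of $w_1$, and Descartes' rule gives exactly one positive root. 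Since $M_1$ has a sign and $M_1|_{\mathcal V}=\dot V$, every bounded component of $\mathcal V$ must surround the unique equilibrium (the origin) and therefore must cut the positive $x$-axis; the one-positive-root count then gives at most one nontrivial component besides the origin itself, so $L_X(V)\le2$. Finally, a small slip at the end: $\dot r\ge cr>0$ for $x^2+y^2\ge R^2$ makes infinity an \emph{attractor}, not a repeller; with that corrected, the exclusion of a limit cycle in the unbounded region proceeds exactly as in Corollary~\ref{co:rig}, and hyperbolicity follows from Theorem~\ref{th:bd} as you say.
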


\begin{proof} For this case the function $H$ given in
  Theorem \ref{te:rig} is $H(x,y)=f''(x)g''(y)\ge0$  and it vanishes on
  a null measure set. Hence the system has at most $L_X(V)$ limit cycles, where $V$  is
the function
  given in \eqref{eq:vrig}. To study
  $L_X(V),$ notice first  that it is not restrictive to assume that
$g(0)=0.$  Then,
  $V(x,0)=x^2W(x),$ where
  \begin{equation*}
W(x)=x f(x)f'(x)+g'(0)x-1-f^2(x).
  \end{equation*}
Since when $f(x)=f_kx^k$ it holds that $xf'(x)-f(x)=(k-1)f_kx^k,$ we
get easily that $W(x)=\sum_{j=0}^{4n} w_k x^k,$ where all $w_k\ge0$
for $k\ge2$ and $w_0=-1-f^2(0)<0.$ Hence, by the Descarte's rule of
signs the number of positive roots of $W$ is 1. As a consequence,
the set $\mathcal{V}=\{V(x,y)=0\}$ has at most one bounded connected component surrounding the origin, different from
the origin itself. Recall that $V$ has the factor $x^2+y^2.$ Notice
that the set $\mathcal{V},$ which is not invariant by the flow of
$X,$ can not contain other bounded connected
components. This is so, because $M_{1}$ does not change sign, and
$M_1\big|_{\mathcal{V}}=\dot V.$ Therefore, these  connected
components must surround some equilibrium point of $X,$ but the
origin is the only one. As a consequence of the above reasoning
$L_X(V)\le 2,$ see Section \ref{ss:LV}.

Let us prove now that under the hypothesis on the growth of $F$ the
maximum number of limit cycles is 1. Notice that if
$r=\sqrt{x^2+y^2},$ it holds that $\dot r=
rF(r\cos\theta,r\sin\theta)\ge cr.$ Hence, the infinity is an
attractor, and we can use similar arguments that in the proof of
Corollary~\ref{co:rig} to show that when $L_X(V)=2$ the differential
system has no limit cycle in the unbounded components of
$\R^2\setminus\mathcal{V}.$ Therefore, the existence of at most one
limit cycle follows.
\end{proof}

\subsection*{Acknowledgments}
This work has received funding from the Ministerio de Ciencia e
Innovaci\'{o}n (PID2019-104658GB-I00 grant) and the Ag\`{e}ncia de Gesti\'{o}
d'Ajuts Universitaris i de Recerca (2017 SGR 1617 grant).

\end{document}